\documentclass[letterpaper,10pt,reqno]{amsart}

\usepackage{graphicx}
\usepackage{caption}
\usepackage{subcaption}

\usepackage{indentfirst} 
\usepackage{amssymb}
\usepackage{mathtools} 
\usepackage{mathabx} 
\usepackage{amsthm}
\usepackage{thmtools}
\usepackage{enumitem} 
\usepackage{fancyhdr}
\usepackage{lipsum}
\usepackage[pagewise]{lineno}

\pagestyle{fancy}
\headheight=14pt
\fancyhf{}

\chead{Equilibrium states for geometric potentials}
\thispagestyle{fancy}
\rhead{\thepage}

\usepackage[colorlinks=true]{hyperref} 
\usepackage[usenames,dvipsnames]{xcolor} 
\usepackage{ifthen}
\usepackage{tikz}
\usetikzlibrary{decorations.pathreplacing}
\usepackage{tikz-cd}
  \usepackage{dsfont}

\makeatletter

\def\MRbibitem{\@ifnextchar[\my@lbibitem\my@bibitem}

\def\mybiblabel#1#2{\@biblabel{{\hyperref{http://www.ams.org/mathscinet-getitem?mr=#1}{}{}{#2}}}}

\def\myhyperanchor#1{\Hy@raisedlink{\hyper@anchorstart{cite.#1}\hyper@anchorend}}

\def\my@lbibitem[#1]#2#3#4\par{%
  \item[\mybiblabel{#2}{#1}\myhyperanchor{#3}\hfill]#4%
  \@ifundefined{ifbackrefparscan}{}{\BR@backref{#3}}%
  \if@filesw{\let\protect\noexpand\immediate
    \write\@auxout{\string\bibcite{#3}{#1}}}\fi\ignorespaces%
}

\def\my@bibitem#1#2#3\par{%
  \refstepcounter\@listctr
  \item[\mybiblabel{#1}{\the\value\@listctr}\myhyperanchor{#2}\hfill]#3%
  \@ifundefined{ifbackrefparscan}{}{\BR@backref{#2}}%
  \if@filesw\immediate\write\@auxout
    {\string\bibcite{#2}{\the\value\@listctr}}\fi\ignorespaces%
}

\makeatother

\DeclareFontFamily{U} {MnSymbolA}{}
\DeclareFontShape{U}{MnSymbolA}{m}{n}{
   <-6> MnSymbolA5
   <6-7> MnSymbolA6
   <7-8> MnSymbolA7
   <8-9> MnSymbolA8
   <9-10> MnSymbolA9
   <10-12> MnSymbolA10
   <12-> MnSymbolA12}{}
\DeclareFontShape{U}{MnSymbolA}{b}{n}{
   <-6> MnSymbolA-Bold5
   <6-7> MnSymbolA-Bold6
   <7-8> MnSymbolA-Bold7
   <8-9> MnSymbolA-Bold8
   <9-10> MnSymbolA-Bold9
   <10-12> MnSymbolA-Bold10
   <12-> MnSymbolA-Bold12}{}
\DeclareSymbolFont{MnSyA} {U} {MnSymbolA}{m}{n}
 \DeclareFontFamily{U} {MnSymbolC}{}
\DeclareFontShape{U}{MnSymbolC}{m}{n}{
  <-6> MnSymbolC5
  <6-7> MnSymbolC6
  <7-8> MnSymbolC7
  <8-9> MnSymbolC8
  <9-10> MnSymbolC9
  <10-12> MnSymbolC10
  <12-> MnSymbolC12}{}
\DeclareFontShape{U}{MnSymbolC}{b}{n}{
  <-6> MnSymbolC-Bold5
  <6-7> MnSymbolC-Bold6
  <7-8> MnSymbolC-Bold7
  <8-9> MnSymbolC-Bold8
  <9-10> MnSymbolC-Bold9
  <10-12> MnSymbolC-Bold10
  <12-> MnSymbolC-Bold12}{}
\DeclareSymbolFont{MnSyC} {U} {MnSymbolC}{m}{n}

\DeclareMathSymbol{\top}{\mathord}{MnSyA}{219} 
\DeclareMathSymbol{\plus}{\mathord}{MnSyC}{20} 


\declaretheorem[numberwithin=section]{theorem}
\declaretheorem[sibling=theorem]{lemma}

\declaretheorem[sibling=theorem]{proposition}
\declaretheorem[sibling=theorem,style=definition]{definition}
\declaretheorem[sibling=theorem]{remark}
\declaretheorem[sibling=theorem]{example}


\setcounter{tocdepth}{1}    
\setcounter{secnumdepth}{3}
\hypersetup{bookmarksdepth = 3} 
\numberwithin{equation}{section}     

\setlist[enumerate,1]{label={\upshape(\alph*)},ref=\alph*}
\setlist[enumerate,2]{label={\upshape(\arabic*)},ref=\arabic*}

\newcommand{\R}{\mathbb{R}}

\newcommand{\N}{\mathbb{N}}

\def\phi{\varphi}
\def\R{{\mathbb R}}

\def\N{{\mathbb N}}

\newcommand{\revised}[1]{\textcolor{blue}{#1}}
\newcommand{\old}[1]{\textcolor{gray}{#1}}

\newcommand{\vertiii}[1]{{\left\vert\kern-0.25ex\left\vert\kern-0.25ex\left\vert #1 
    \right\vert\kern-0.25ex\right\vert\kern-0.25ex\right\vert}}
\newcommand{\invertiii}[1]{{\vert\kern-0.25ex\vert\kern-0.25ex\vert #1 
    \vert\kern-0.25ex\vert\kern-0.25ex\vert}}

\begin{document}

\title{Equilibrium states for piecewise weakly convex interval maps}

\date{\today}


\thanks{Supported by ANID Doctorado Nacional 21210037.}
\subjclass[2020]{Primary: 37D35; Secondary:37D25, 37E05}
\keywords{Equilibrium states, piecewise monotone transformations, geometric potential.}

\author{ Nicol\'as Ar\'evalo-Hurtado.}	
\address{Facultad de Matem\'aticas,
Universidad Escuela Colombiana de Ingenier\'ia Julio Garavito, Ak. 45 \# 205-59 (Autopista Norte), Bogot\'a, Colombia}
\email{\href{nicolas.arevalo-h@escuelaing.edu.co}{nicolas.arevalo-h@escuelaing.edu.co}}
\urladdr{\url{https://sites.google.com/view/nicolasarevalomath/}}

\begin{abstract}
We prove the existence of equilibrium states for geometric potentials in a class of piecewise weakly convex interval maps. This class includes systems with indifferent fixed points and non-Markov partitions. Under additional hypotheses we also obtain uniqueness.
\end{abstract}

\maketitle

\section{Introduction}
Let $T:[0,1]\rightarrow [0,1]$ be a \textit{piecewise monotone, orientation-preserving transformation}; that is, there is a fixed partition $0=a_{0}<\cdots < a_{N}=1$ such that for each $1\leq k \leq N$ the restriction $T_{k}:=T|_{I_{k}}$, where $I_{k}=[a_{k-1},a_{k})$, is strictly increasing and continuous. Suppose furthermore that $T$ is non-singular, i.e., $m\circ T^{-1}_{k}$ is absolutely continuous with respect to $m$ for each $1\leq k\leq N$, where $m$ is the Lebesgue measure. Since each branch $T_{k}$ is injective, an extended inverse function can be defined on $[0,1]$: for each $x\in [0,1]$
\begin{align*}
\psi_{k}(x):=\begin{cases}a_{k-1} & \text{if $x\leq \inf_{y\in I_{k}} T_{k}(y)$},\\
T^{-1}_{k}(x) & \text{if $x\in T_{k}\mathring{I}_{k}$},\\
a_{k} & \text{if $x\geq \sup_{y\in I{k}} T_{k}(y)$},\end{cases}
\end{align*}where $\mathring{I_{k}}=(a_{k-1},a_{k})$. Since $\psi_{k}$ is differentiable for Lebesgue almost every point, we may assume that for each $1\leq k \leq N$ the function $\sum^{k}_{i=1}\psi_{i}'$ is upper semicontinuous. In \cite{bmss}, Bose, Maume-Deschamps, Schmitt, and Shin considered non-singular piecewise monotone and orientation-preserving maps such that
\begin{enumerate}
    \item \label{Cond1} $T'(0)>1$ and $T_{k}'(a_{k-1})>0$ for each $1\leq k \leq N$.
    \item \label{Cond2} the function $\sum^{k}_{i=1}\psi_{i}'$ is non-increasing for each $1\leq k\leq N$.
\end{enumerate}
For each map $T$ satisfying these two conditions, the existence of a $T$-invariant measure absolutely continuous with respect to the Lebesgue measure (ACIM) was established (see \cite[Theorem 1.1]{bmss}). For such maps the Perron--Frobenius operator preserves the cone of non-increasing, non-negative functions on the unit interval. Recall that fixed points of the Perron--Frobenius operator correspond to ACIMs with respect to $m$. This generalizes Lasota and Yorke's results on a class $\mathcal{C}$ of piecewise monotone, orientation-preserving transformations in which each $T_{k}$ is convex, $T_{k}(a_{k-1})=0$, and $T'(0)>1$ (see Example \ref{ConvexTransformationsLasotaYorke} and \cite{ly}). In that setting, the Perron--Frobenius operator preserves the space of functions of bounded variation. Our objective is to establish the existence of equilibrium states for geometric potentials for a subfamily satisfying conditions (\ref{Cond1}) and (\ref{Cond2}) while still containing $\mathcal{C}$.
 
Since we may work with different ergodic measures, we need more than the a.e.\ existence of $\psi_{i}'$. We say that $T$ is an \textit{$a$-convex} transformation if it satisfies condition (\ref{Cond1}) and, for each $1\leq k\leq N$, the function $\sum^{k}_{i=1}\psi_{i}$ is differentiable except at most at countably many points, and $\sum^{k}_{i=1}\psi_{i}'$ is non-increasing.

The weak convexity of these maps stems from the fact that if $T_{i}$ is convex, then $\psi_{i}$ is concave, so $\psi_{i}'$ exists except possibly on a countable set and is non-increasing. This family, referred to as $a$-convex transformations due to their average convexity, may include concave branches (so some $\psi_{i}'$ may be increasing), indifferent fixed points, and non-Markov partitions. These features limit the applicability of conjugacies with topological Markov shifts or of hyperbolic techniques, where thermodynamic formalism is more fully developed. Nevertheless, $a$-convex transformations share ordering properties with the family of attractive $g$-functions defined on the full shift with finitely many symbols (see \cite{hu} and Remark \ref{PHulseRemark}).

Let \( \varphi: X \to \mathbb{R} \) be a measurable function on the Borel $\sigma$-algebra; the \emph{topological pressure of $\varphi$} is defined as
\[
P(\phi) := \sup \left\{ h_{\mu} + \int \phi \, d\mu : \mu \in \mathcal{M}(X,T), \int \phi \, d\mu > -\infty \right\},
\]
where \( h_{\mu} \) is the measure-theoretic entropy of \( \mu \), and \( \mathcal{M}([0,1]) \) denotes the set of \( T \)-invariant probability measures. A measure that attains the supremum is called an \textit{equilibrium state} for \( \varphi \). We also refer to $\varphi$ as the \textit{potential}.

Let $s\in \R$, and suppose $\mu_{s}$ is an equilibrium state for the geometric potential $-s\log|T'|$. It is well known that, since $\mu_{s}$ is ergodic, the Lyapunov exponent of $\mu_{s}$-almost every point is constant. Consequently, equilibrium states of geometric potentials are related to the possible values that the Lyapunov exponent can attain (see \cite{io,an,we}). In the hyperbolic context, the ACIM with respect to the Lebesgue measure is an equilibrium state for the potential $-\log|T'|$ (see \cite[Theorem 9.7.1]{BG}). We aim to develop Thermodynamic Formalism for geometric potentials of $a$-convex transformations, i.e., study the space of equilibrium states for the family of geometric potentials.

Let $C([0,1])$ be the space of real-valued continuous functions on $[0,1]$. For each $s\in \R$ and each $f\in C([0,1])$, we define the operator
\begin{align*}
    F_{s}(f)=\sum^{N}_{i=1}(\psi_{i}')^{s}f\circ \psi_{i}.
\end{align*} In different parts of the paper we consider $F_{s}$ on different function spaces. For example, in the uniformly hyperbolic setting, fixed points may be sought in the space of H\"older continuous functions. When $s=1$, we can extend $F_{1}$ to $L^{1}_{m}([0,1])$, the space of integrable functions, where it coincides with the Frobenius--Perron operator (see \cite{ly}).

 Following \cite{hk1}, in Section 3 we define an extension $\overline{[0,1]}$ of $[0,1]$ with the order topology such that the corresponding extension of $F_{s}$ preserves the space of continuous functions and $T'$ admits a continuous extension. We establish the existence of a \textit{conformal measure} $m_{s}$ related to $F_{s}$ on $\overline{[0,1]}$, i.e., for each Borel set $A$ on $\overline{[0,1]}$ and any $f\in L^{1}_{m_{s}}(\overline{[0,1]})$
 \begin{align*}
     \int_{T^{-1}A}fdm_{s}=\int_{A}F_{s}fdm_{s},
 \end{align*}up to a constant multiplication.
 

 There are many strategies for studying the thermodynamic formalism of non-uniformly hyperbolic systems, including inducing schemes and Hofbauer towers (see \cite{li}). Nevertheless, most approaches require Markov partitions. For systems with non-Markov partitions, the lack of a symbolic framework often restricts thermodynamic formalism to locally invertible systems (see \cite{hk1}). Thus, dynamical systems with both indifferent fixed points and non-Markov partitions are of particular interest.
 
For expanding, piecewise monotone, orientation-preserving maps on $[0,1]$, Keller and Hofbauer found fixed points of the operator $F_{s}$ on the space $L^{1}_{m_{s}}$ of $m_{s}$-integrable functions and showed that these fixed points yield the density of the equilibrium state for the geometric potential. Their arguments do not apply directly in the presence of indifferent fixed points in some $a$-convex transformations (see Example \ref{ExampleParabolicAconvex}). Moreover, even when indifferent fixed points exist, we do not employ inducing to study the thermodynamic formalism of $a$-convex transformations; a different approach is needed to handle non-Markov partitions. Finally, the geometric potential need not be H\"older continuous (see Example \ref{ExampleParabolicNonMarkovianAconvex}), a condition usually required for the existence of equilibrium states. We now state sufficient conditions for our results.

When studying $a$-convex transformations, expansivity at $t=0$ causes the existence of a forward invariant interval. Given $T$ an $a$-convex transformation, there exists a point $\beta\in (0,1]$ such that $\bigcup^{\infty}_{i=0}T^{i}[0,a_{1}]=[0,\beta]$ (see \cite[Lemma 2]{bmss}). Without loss of generality, we may assume that $\beta=a_{N^{*}}$ for some  $1\leq N^{*}\leq N$ (see Remark \ref{SomeIterationExpansive}). For every $1\leq k\leq  N$, let $I
_{k}=[a_{k-1},a_{k})$, and for every array of integers $(d_{i})^{r}_{i=1}\in \{1,...,N\}^{r}$, define the  \textit{cylinder}
\begin{align*}
    I_{(d_{i})^{r}_{i=1}}=\bigcap^{r-1}_{i=0}T^{-i}I_{d_{i+1}}.
\end{align*} Note that $I_{(d_{i})^{r}_{i=1}}$ may be empty. 

For every probability measure $\mu$ and every measurable function $f$ we denote $\mu(f)=\int f d\mu$. Now we consider $F_{s}$ to be defined over the set of bounded variation functions. We say that $T$ satisfies \textit{condition $(B)$} for $s>0$ if there exists a sequence of positive numbers $\{M_{r}\}_{r\in \N}$ that converges to zero such that for every cylinder $I_{(d_{i})^{r}_{i=1}}$ that does not contain $\beta$, we have
\begin{align*}
    \inf_{n\in \N} \dfrac{\|F^{n}_{s}\chi_{I_{(d_{i})^{r}_{i=1}}}\|_{\infty}}{(m_{s}(F_{s}\mathds{1}))^{n}}\leq M_{r},
\end{align*} where $\|\cdot\|_{\infty}$ denotes the supremum norm and $\mathds{1}$ the constant function equal to 1. Let $\mathcal{J}$ be the cone of non-negative non-increasing functions over $[0,1]$. We say $T$ satisfies \textit{condition $(C)$} for $s>0$ if for every $1\leq k \leq N$ we have $\sum^{k}_{i=1}(\psi_{i}')^{s}\in \mathcal{J}$.

\begin{theorem}\label{PrincipalThmChap52}
    Let $T$ be an $a$-convex transformation satisfying conditions $(B)$ and $(C)$ for a given $s>0$. If either 
    \begin{align*}
        \lim_{x\rightarrow \beta^{-}}\psi_{N^{*}}'(x)<1\text{, or  $\lim_{x\rightarrow \beta^{+}}\psi_{N^{*}+1}(x)=1$},
    \end{align*} then there exists an equilibrium state $\mu_{s}$ for the potential $-s\log|T'|$ which is absolutely continuous with respect to $m_{s}$, and such that $\frac{d\mu_{s}}{dm_{s}}\in \mathcal{J}$. Furthermore, if $\beta=1$ and $m_{s}\neq \delta_{\beta}$, then $\mu_{s}$ is the unique equilibrium state for $-s\log|T'|$, which is absolutely continuous with respect to $m_{s}$. 
\end{theorem}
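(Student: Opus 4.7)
The plan is to construct the density $h_s := d\mu_s/dm_s$ as a non-trivial fixed point of the normalized transfer operator $L_s := \lambda_s^{-1} F_s$, where $\lambda_s := \int F_s \mathds{1}\, dm_s$ is the conformal eigenvalue, and then to verify that the resulting measure $\mu_s := h_s\, m_s$ attains the topological pressure of $-s\log|T'|$.

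For the construction I use a Cesaro-Helly argument. Condition $(C)$ gives the cone-preservation $L_s(\mathcal{J}) \subseteq \mathcal{J}$ via an Abel summation that exploits that each partial sum $\sum_{i=1}^{k}(\psi_i')^s$ lies in $\mathcal{J}$ and that the values $f\circ \psi_i$ are monotonically ordered in $i$ for $f\in\mathcal{J}$. Consequently the averages $h_n := \frac{1}{n}\sum_{k=0}^{n-1} L_s^k \mathds{1}$ all belong to $\mathcal{J}$ and integrate to $1$ against $m_s$. A diagonal application of Helly's selection theorem (uniform $L^1$-boundedness plus monotonicity forces local boundedness away from $0$) extracts a subsequential pointwise limit $h_s \in \mathcal{J}$ off a countable set, and the telescoping identity $L_s h_n - h_n = (L_s^n\mathds{1} - \mathds{1})/n$ forces $L_s h_s = h_s$, so $\mu_s$ is $T$-invariant after normalization. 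The equilibrium property then follows along Keller-Hofbauer lines: conformality of $m_s$ yields the Rokhlin-type inequality $h_\nu(T) - s\int \log|T'|\, d\nu \leq \log\lambda_s$ for every ergodic $\nu \ll m_s$ via a Jensen step applied to the Jacobian, with equality at $\mu_s$ since $L_s h_s = h_s$ makes the Jensen step sharp $\mu_s$-almost everywhere, identifying $\log\lambda_s$ as the pressure.

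The crucial step, and the main obstacle, is ensuring $h_s \not\equiv 0$. Because $h_s \in \mathcal{J}$ is non-increasing, this reduces to a uniform positive lower bound for $h_n$ on a neighbourhood of $0$. Here the dichotomy in the hypothesis is decisive: an indifferent fixed point at $\beta$ corresponds exactly to the borderline case $\psi_{N^*}'(\beta^-)=1$, which could let iterates $L_s^n \mathds{1}$ concentrate all their mass at $\beta$ and produce a degenerate Cesaro limit. The assumption $\lim_{x\to\beta^-}\psi_{N^*}'(x)<1$ provides strict contraction in the terminal inverse branch at $\beta$ and rules out this concentration; alternatively, $\lim_{x\to\beta^+}\psi_{N^*+1}(x)=1$ forces any mass carried beyond $\beta$ to be redistributed near $1$ rather than accumulate at $\beta$. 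Either alternative, combined with the monotonicity built into $\mathcal{J}$, yields the required lower bound; since spectral-gap or quasi-compactness tools are unavailable in the presence of possibly neutral branches, this cylinder-level estimate inside $\mathcal{J}$ is the technical heart of the argument.

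For uniqueness under $\beta=1$ and $m_s \neq \delta_\beta$, let $\tilde h \in \mathcal{J}$ be any other $L_s$-fixed density produced by the same procedure, and consider $f := \tilde h - c h_s$ for the constant $c>0$ making $f$ have zero mean against $m_s$. Then $f = L_s^n f$ for every $n$, so decomposing $f$ over a cylinder partition and applying condition $(B)$, which gives $\|L_s^n \chi_{I_{(d_i)_{i=1}^{r}}}\|_\infty \leq M_r$ on every cylinder avoiding $\beta=1$, localizes to $|f| \leq 2 M_r (\|\tilde h\|_\infty + c \|h_s\|_\infty)$ on arbitrary finite unions of such cylinders. The hypothesis $m_s \neq \delta_1$ guarantees that cylinders avoiding $1$ exhaust $[0,1)$ in $m_s$-measure as $r \to \infty$, so $f \equiv 0$ $m_s$-a.e., proving $\tilde h = c h_s$ and hence uniqueness after normalization.
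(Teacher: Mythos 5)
Your construction of the invariant density (cone preservation of $\mathcal{J}$ under condition $(C)$ plus a Cesaro--Helly extraction) is essentially the paper's route to Theorem \ref{ACIM}, and the role you assign to the dichotomy at $\beta$ matches the paper's use of Propositions \ref{IKPartition} and \ref{PositiveDensity}. The genuine gap is in the step certifying that $\mu_{s}$ is an equilibrium state. You prove the upper bound $h_{\nu}(T)-s\int\log|T'|\,d\nu\leq\log\lambda_{s}$ only for ergodic $\nu\ll m_{s}$, but the pressure in \eqref{PressureVariationalPrinciple} is a supremum over \emph{all} $T$-invariant measures on $Y$, including measures singular with respect to $m_{s}$ (for instance $\delta_{\beta}$ when $\beta$ is an indifferent fixed point, and in general many others); a Jensen step applied to the Jacobian relative to $m_{s}$ gives nothing for singular $\nu$. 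The paper closes this by normalizing: since $g_{\beta}$ is bounded below on $[0,\beta]$ (Proposition \ref{PositiveDensity}), the function $\overline{g}=|T'|^{-s}g_{\beta}/(g_{\beta}\circ T)$ is a continuous $g$-function with $G_{s}\mathds{1}=\mathds{1}$, and the Ledrappier--Walters theorem (Theorem \ref{Ledrapierthm}) yields $h_{\nu}+\int\log\overline{g}\,d\nu\leq\log\gamma$ for \emph{every} invariant $\nu$, with equality exactly at fixed points of $G_{s}^{*}$; Lemma \ref{Lemma 15} converts this back to the potential $-s\log|T'|$. Without a substitute for this normalization step, your argument does not establish the variational principle, and note that the normalization itself is only available once the strict positivity of the density on $[0,\beta]$ has been proved.

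A secondary problem is the uniqueness paragraph. You use condition $(B)$ as if it gave $\|L_{s}^{n}\chi_{I_{(d_{i})_{i=1}^{r}}}\|_{\infty}\leq M_{r}$ for all $n$, but the hypothesis only controls $\inf_{n}$, and the infimum may be attained at different $n$ for different cylinders; moreover, decomposing $f=L_{s}^{n}f$ over the level-$r$ partition produces a sum over up to $N^{r}$ cylinders that your estimate does not control. Finally, uniqueness must hold among all invariant densities in $L^{1}_{m_{s}}$, not only those produced by your procedure inside $\mathcal{J}$; the paper obtains this from the Ionescu-Tulcea--Marinescu spectral decomposition (Theorem \ref{SpectralDescomp}: finitely many ergodic components with disjoint supports) combined with the full-support and lower-bound statement of Proposition \ref{PositiveDensity} when $\beta=1$ and $m_{s}\neq\delta_{\beta}$.
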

This article is organized as follows. Section 2 presents examples of $a$-convex transformations not covered in \cite{bmss}, illustrating different dynamics to which Theorem \ref{PrincipalThmChap52} applies. Section 3 addresses the existence of the conformal measure $m_{s}$ for the operator $F_{s}$. Section 4 studies the space of $F_{s}$-invariant functions in the space of functions of bounded variation. Finally, Section 5 contains the proof of Theorem \ref{PrincipalThmChap52}.

\section{On $a$-convex transformations}

Given an $a$-convex transformation $T$, we say that a fixed point $x\in (0,1]$ is \textit{indifferent} if $|T'(x)|=1$. The partition defined by $\{a_{i}\}_{i=0}^{N}$ is called a \textit{Markov partition} if for any $1\leq i,j \leq n$ such that $(a_{i-1},a_{i})\cap T((a_{j-1},a_{j}))$ is non-empty, then $(a_{i-1},a_{i})\subset T((a_{j-1},a_{j})).$ 
\begin{remark}\label{SomeIterationExpansive} \normalfont
    Here we summarize the necessary properties of the point $\beta$ mentioned in the introduction to prove our results.
    
    Let $T$ be an $a$-convex transformation. Then, there exists a point $\beta \in (0,1]$ and a positive integer $r$ such that  (see \cite[Lemma 4.1 and Lemma 4.2]{bmss})
    \begin{align}\label{ZeroBetaForwardInvariant}
        \bigcup^{r}_{k=0}T^{k}[0,a_{1}]=\bigcup^{\infty}_{k=0}T^{k}[0,a_{1}]=[0,\beta].
    \end{align}
    Moreover, let $N^{*} \leq N$ be such that $\beta \in [a_{N^{*}-1}, a_{N^{*}}]$. We can split the $N^{*}$-th branch of $T$ so that $T|_{[0,\beta]}$ is again an $a$-convex transformation on the interval $[0,\beta]$. From now on, we assume $\beta=a_{N^{*}}$.
    
    The subsystem $(T, [0, \beta])$ carries all the relevant dynamical properties, as it inherits the hyperbolicity of $T$ at $x=0$. If $\lim_{x\rightarrow \beta^{-}}\psi_{N^{*}}'(x)<1$, then $(T,[0,\beta])$ is exact, has exponential decay of correlations, and some iterate of $T$ is expansive on $[0,\beta]$ (see \cite[Theorem 4.5, Remark 4.2]{bmss}).

\end{remark}
Recall that, for each measurable set $A$ and for $f\in L^{1}_{m}([0,1])$, the Frobenius-Perron operator $F_{1}$ with respect to the Lebesgue measure $m$ satisfies
\begin{align}\label{Perron-Frobenius}    \int_{T^{-1}A}fdm=\int_{A}F_{1}fdm.
\end{align} 
\begin{example}\label{ExampleParabolicAconvex}
    For every $x\in [0,1]$, consider the map defined by (see Figure \ref{fig:$a$-convexParabolic})
    \begin{align*}
        T(x)=\begin{cases}
            \frac{1}{3}(2+3x-2\sqrt{1-3x}) & \text{if $x\in [0,1/3)$,}\\
            x-\frac{3}{4}(1-x)^{2} & \text{if $x\in [1/3,1]$}.
        \end{cases}
    \end{align*}
    
    This example is constructed to satisfy 
\begin{align}\label{ExamploParaAconvex}
    \psi_{1}'=1-\psi_{2}'.
\end{align}
By the Inverse Function Theorem, this implies
\begin{align*}
    -\dfrac{T_{1}''\circ T_{1}^{-1}}{(T_{1}'\circ T_{1}^{-1})^{3}}=\dfrac{T_{2}''\circ T_{2}^{-1}}{(T_{2}'\circ T_{2}^{-1})^{3}}.
\end{align*}
Thus, $T_{1}$ is convex, $T_{2}$ is concave, and equation \eqref{ExamploParaAconvex} implies that $\psi_{1}'+\psi_{2}'=1$. Note also that $T_{1}'(0)>1$. Consequently, $T$ is an $a$-convex transformation, and the Lebesgue measure is $T$-invariant because $F_{1}\mathds{1}=\mathds{1}$ (see equation \ref{Perron-Frobenius}). Moreover there is an indifferent fixed point at $x=1$, which implies $\lim_{x\rightarrow 1/3^{-}}T_{1}'(x)=\infty$ by equation \eqref{ExamploParaAconvex}. 

    \begin{figure}\label{ExampleFigure 1}
     \centering
     \begin{subfigure}[b]{0.45\textwidth}
         \centering
         \includegraphics[width=\textwidth]{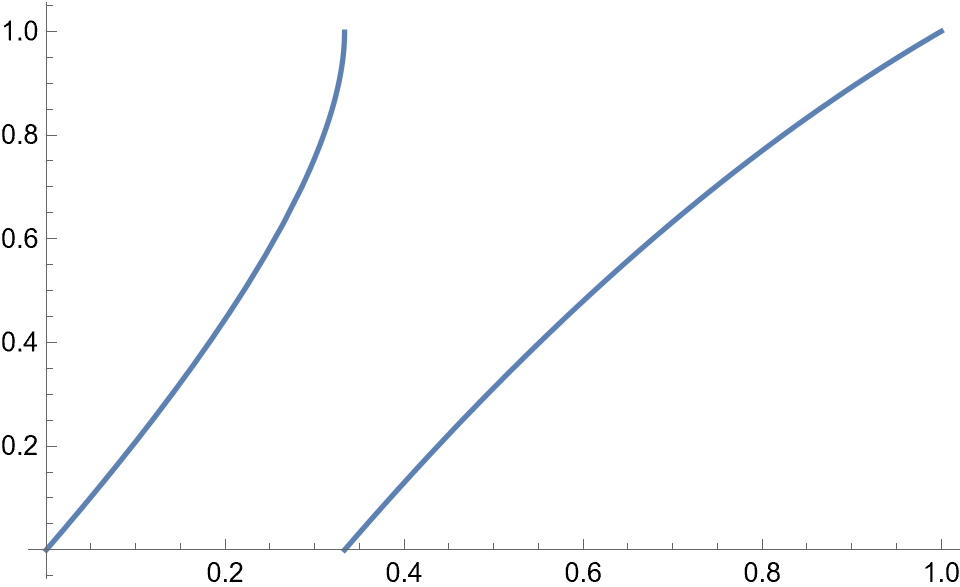}
         \caption{Parabolic fixed point at $x=1$.}
     \end{subfigure}
     \hfill
     \begin{subfigure}[b]{0.45\textwidth}
         \centering
         \includegraphics[width=\textwidth]{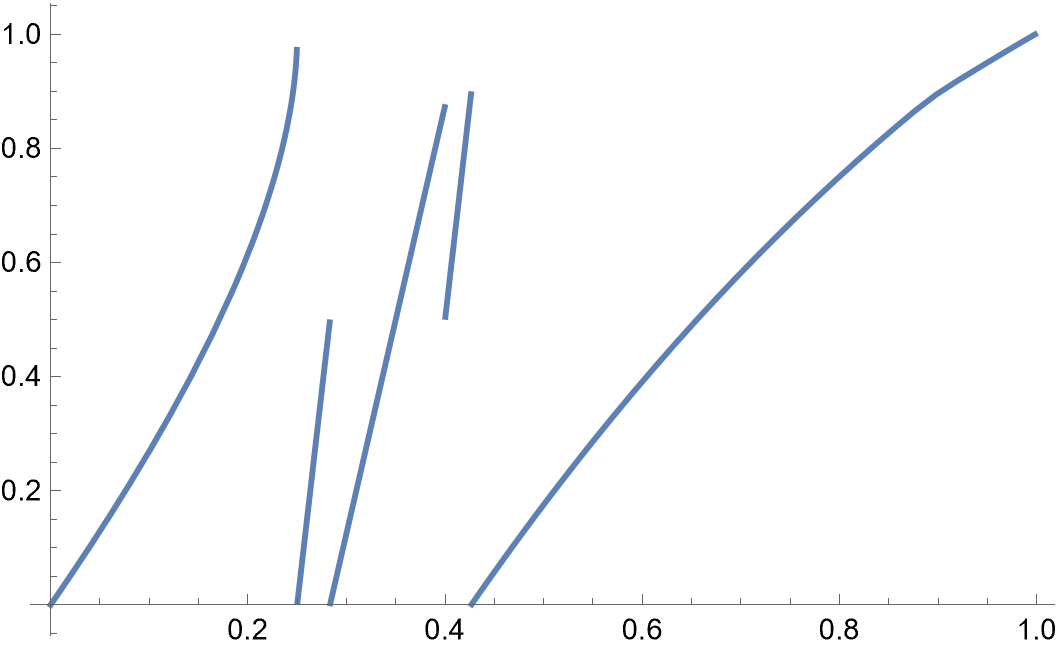}
         \old{\caption{Parabolic fixed point and non-markov partition.}}
         \revised{\caption{Parabolic fixed point and non-Markov partition.}}
     \end{subfigure}
        \caption{The graph of $a$-convex transformations from Example \ref{ExampleParabolicAconvex} (A) and Example \ref{ExampleParabolicNonMarkovianAconvex} (B).}
        \label{fig:$a$-convexParabolic}
\end{figure}
  Since $\psi_{1}'$ is non-increasing, it follows that $(\psi_{1}')^{s}$ is also non-increasing for every $s>0$. Additionally, for $s>0$ with $s\neq 1$, we have 
\begin{align*}
    \dfrac{d}{dx}((\psi_{1}')^{s}+(\psi_{2}')^{s})=s\psi_{2}''(-(1-\psi_{2}')^{s-1}+(\psi_{2}')^{s-1}).
\end{align*}
Because $\psi_{2}''>0$, the function $(\psi_{1}')^{s}+(\psi_{2}')^{s}$ is non-increasing if and only if 
\begin{align}\label{Ecu537}
    (\psi_{2}')^{s-1}-(1-\psi_{2}')^{s-1}\leq 0.
\end{align}
Since $\psi_{2}'$ is increasing and $\psi_{2}'(0)=1/2$, we have $\psi_{2}'\geq 1/2$. Therefore, equation \eqref{Ecu537} holds only for $0<s<1$. Consequently, $T$ is an $a$-convex transformation that satisfies condition $(C)$ for all $0<s\leq 1$. In Example \ref{ConditionBExamplesInfFixPt}, we show that this map also satisfies condition $(B)$ for $0<s<1$.\end{example}
\begin{example}\label{ExampleParabolicNonMarkovianAconvex}
      We can also construct an $a$-convex transformation with an indifferent fixed point at $x=1$, and without a Markov partition (see Figure \ref{fig:$a$-convexParabolic}). For each $x\in [0,1]$ consider 
      \begin{align*}
         T(x)=\begin{cases}
             \frac{1}{4}\left(3-3\sqrt{1-4x}+4x\right) & \text{if $x\in [0,\frac{1}{4})$},\\
             15x-\frac{15}{4} & \text{if $x\in [\frac{1}{4},\frac{17}{60})$},\\
             \frac{15}{2}x-\frac{17}{8} & \text{if $x\in [\frac{17}{60},\frac{2}{5})$},\\
             15x-\frac{11}{2} & \text{if $x\in [\frac{2}{5},\frac{32}{75})$}.
         \end{cases}
     \end{align*}
      The branches $T_{2},T_{3}$, and $T_{4}$ are constructed so that $\sum_{i=1}^{k}\psi_{i}'<1$ and $\sum_{i=1}^{k}\psi_{i}'\in \mathcal{J}$ for each $1\leq k\leq 4$. Note also that $T'(0)>1$. Finally, $T_{5}$ is implicitly defined on $[\frac{32}{75},1]$ by the equation $\sum_{i=1}^{5}\psi_{i}'=1$. Thus, $T$ is an $a$-convex transformation, and the Lebesgue measure is $T$-invariant. Since $T[\frac{17}{60},\frac{2}{5}]=[0,\frac{7}{8}]$ is not a union of intervals in the partition $\mathcal{P}=\{[a_{j-1},a_{j})\}_{j=1}^{5}$, $\mathcal{P}$ is non-Markov. Furthermore, $x=1$ is an indifferent fixed point because $\sum_{j=1}^{5}\psi'_{j}(1)=0+\psi_{5}'(1)=1$.
\end{example}

\section{Conformal measure for $-s\log|T'|$}
Let $T$ be an $a$-convex transformation satisfying condition $(C)$ for some $s > 0$ and $h \in C([0,1])$. The \textit{transfer operator} of $T$ with respect to $h$ is defined on $C([0,1])$ as follows: for each $f\in C([0,1])$
\begin{align*}
    f(x)\mapsto \sum_{y\in T^{-1}x}e^{h(y)}f(y).
\end{align*} Notice that $\psi_{i}'(x)= e^{-\log|T'T_{i}^{-1}x|}\chi_{\overline{TI_{i}}}$ for each $1\leq i \leq N$, where $\chi_{\overline{TI_{i}}}$ denotes the characteristic function of the closure of $T(I_{i})$.

In this section we construct an extension $\overline{[0,1]}$ of $[0,1]$ as a topological space in which $T'$ and the function $\sum_{i=1}^{k} (\psi_{i}')^{s}$ is continuous for every $1 \leq k \leq N$. Thus, for every $f\in C(\overline{[0,1]})$ the extended operator $F_{s}$ is as the transfer operator for the potential $h=-s\log|T'|$ on $\overline{[0,1]}$. We follow the construction presented in \cite{hk1} to achieve this. Define
\begin{align*}
            V_{k}:=\left\{x\in[0,1]:\lim_{y\rightarrow x^{+}} \sum^{k}_{i=1}\psi_{i}'(y)\neq \lim_{y\rightarrow x^{-}}\sum^{k}_{i=1}\psi_{i}'(y)\right\}.
\end{align*} Let $V=\left(\bigcup^{N}_{k=1}V_{k}\right)\cup \{0,1\}$. Then, all possible orbits with discontinuities of $\sum^{k}_{i=1}(\psi_{i}')^{s}$ and $T$ are contained in the set 
        \begin{align*}
            W=\bigcup^{\infty}_{i=0}T^{-i}\left(\bigcup^{\infty}_{j=0}T^{j}V\right)\setminus \{0,1\}.
        \end{align*}
Since $V$ is countable by the differentiability condition of $a$-convex transformations, it follows that $W$ is a countable $T$-invariant set. For each $x\in W$, we replace $x$ in $[0,1]$ with $x^{+}$ and $x^{-}$ to define
        \begin{align*}
            \overline{[0,1]}=W^{c}\cup \bigcup_{x\in W}\{x^{+}\}\cup \bigcup_{x\in W}\{x^{-}\}.
        \end{align*}
The order relation on $[0,1]$ is extended to $\overline{[0,1]}$ as follows: $ y<x^{-}<x^{+}<z$ whenever $y<x<z$ in $[0,1]$. The space $\overline{[0,1]}$ endowed with the order topology is a compact topological space (\cite[Theorem 12, Chapter X, section 7]{bg}). Since $[0,1]\setminus W$ is dense in $\overline{[0,1]}$, for every $x\in W$ we define $\sum^{k}_{i=1}(\psi_{i}')(x^{+})$, $\sum^{k}_{i=1}(\psi_{i}')(x^{-})$, $T(x^{+})$, and $T(x^{-})$ continuously on $\overline{[0,1]}$. Indeed, for any $x,y\in W$, we have 
    \begin{align*}
        [y^{+},x^{-}]=\{z\in \overline{[0,1]}:y^{+}\leq z\leq x^{-}\}=\{z\in \overline{[0,1]}:y^{-}<z<x^{+}\}=(y^{-},x^{+}),
    \end{align*} 
    which implies that the extensions of $\sum_{i=1}^{k}(\psi_{i}')^{s}$ and $T$ are continuous with respect to the order topology. Thus, the extended $F_{s}$ acts as an operator from $C(\overline{[0,1]})$ to $C(\overline{[0,1]})$. Since $F_{s}$ is linear, for any $f, g \in C(\overline{[0,1]})$, we have
    \begin{align*}
        \|F_{s}f-F_{s}g\|_{\infty}\leq K\|f-g\|_{\infty},
    \end{align*} 
    where $K = \left\|\sum_{i=1}^{N}(\psi_{i}')^{s}\right\|_{\infty}$ is finite by condition $(C)$. This implies that $F_{s}$ is continuous on $(C(\overline{[0,1]}),\|\cdot\|_{\infty})$. Therefore, the dual operator $F_{s}^{*}$ is continuous on $C^{*}(\overline{[0,1]})$, the dual space of $C(\overline{[0,1]})$. By the Riesz Representation Theorem, $C^{*}(\overline{[0,1]})$ is isomorphic to the Banach space of regular finite measures on $\overline{[0,1]}$ with the weak* topology. Since the space $\mathcal{M}_{1}(\overline{[0,1]})$ of positive Borel probability measures on $\overline{[0,1]}$ is convex and compact in $C^{*}(\overline{[0,1]})$, we apply the Schauder-Tychonoff Theorem to the continuous map
    \begin{align*}
        \mu \mapsto \dfrac{F_{s}^{*}\mu}{F_{s}^{*}\mu(\mathds{1})},\quad \mu\in \mathcal{M}_{1}(\overline{[0,1]}).
    \end{align*} 
    Thus, we obtain a fixed point $m_{s}$. In other words, for every $f \in C(\overline{[0,1]})$,
    \begin{align}\label{Msmeasure}
        m_{s}(F_{s}f)=\gamma m_{s}(f),
    \end{align}
    where $\gamma = m_{s}(F_{s}\mathds{1})$. Hence, $m_{s}$ is a fixed point of $\gamma^{-1}F^{*}_{s}$ and  $F_{s}\mathds{1}(1)\leq \gamma \leq F_{s}\mathds{1}(0)$. For simplicity $\overline{F}_{s}=\gamma^{-1}F_{s}$. The following proposition is analog to \cite[Lemma 6]{hk2}\label{PmarkovOp} and important to apply fixed points methods.

    \begin{proposition}\label{MarkovOp}
    Suppose that $m_{s}(\{x\}) = 0$ for every $x \in \overline{[0,1]}$. Then, $\overline{F_{s}}$ is a Markov operator on $L^{1}_{m_{s}}(\overline{[0,1]})$, i.e., for all $f \in L^{1}_{m_{s}}(\overline{[0,1]})$
        \begin{enumerate}
            \item $m_{s}(\overline{F_{s}}f)=m_{s}(f)$.
            \item $\|\overline{F_{s}}f\|_{L^{1}_{m_{s}}}\leq \|f\|_{L^{1}_{m_{s}}}$.
        \end{enumerate}
    \end{proposition}
     \begin{proof}
        Since $C(\overline{[0,1]})$ is dense in the Banach space $L^{1}_{m_{s}}(\overline{[0,1]})$, we may extend equation \eqref{Msmeasure} over $m_{s}$-integrable functions to get $(a)$. Let $f$ be a bounded measurable function and define in the support of $\overline{F}_{s}f$ the function $\varphi(x)=\frac{\overline{F}_{s}f(x)}{|\overline{F}_{s}f(x)|}$. Thus $|\overline{F}_{s}f(x)|=\overline{F}_{s}f(x)\cdot \varphi(x)$ and  $|\varphi(1)|=1$ a.e. $x\in m_{s}$, so that
\begin{align*}
    \|\overline{F}_{s}f\|_{L^{1}_{m_{s}}}=\int \overline{F}_{s}f(x)\cdot \varphi(x) dm_{s}= \int \overline{F}_{s}(f(\varphi\circ T) )dm_{s},
\end{align*} where we have used the property $\overline{F}_{s}(f \cdot (g\circ T))=\overline{F}_{s}f\cdot \varphi $ for any measurable bounded function $f$ and $g$. Since $m_{s}$ is a fixed point of $\overline{F}_{s}^{*}$ and $|\varphi (x)|=1$
\begin{align}\label{PsContracting1}
    \|\overline{F}_{s}f\|_{L^{1}_{m_{s}}}=\int f(\varphi \circ T)dm_{s} \leq \int |f|\cdot |\varphi \circ T| dm_{s} \leq \|f\|_{L^{1}_{m_{s}}}.
\end{align} 
If we change a bounded measurable function on a set of zero measure $B$, then $TB$ has zero measure. Thus $\overline{F}_{s}f$ will change in a set of zero measure. Then $\overline{F}_{s}$ can be extended continuously to all $L^{1}_{m_{s}}$.
     \end{proof}
    
    To deduce $m_{s}$ is indeed a probability measure on $[0,1]$, we require it to assign zero measure on the set of duplicate points $\bigcup_{x\in W}\{x^{+},x^{-}\}$. This is the case when $T^{n}$ is expanding for some $n\in \mathbb{N}$ (see \cite[Lemma 2]{hk1}). Bose et al. provide sufficient conditions on $a$-convex transformations for $T$ to have some expansive iteration (see \cite[Theorem 4.10]{bmss} and Remark \ref{SomeIterationExpansive}). To address cases where $(T,[0,1])$ has indifferent fixed points, condition $(B)$ will be sufficient to imply $m_{s}\in \mathcal{M}_{1}([0,1])$ (see Remark \ref{SomeIterationExp2}).

    \begin{example}\label{ConvexTransformationsLasotaYorke}

Consider the family $\mathcal{C}$ of piecewise monotone convex transformations studied in \cite{ly}. Let $T \in \mathcal{C}$. Since each $T_k$ is convex, it is differentiable except at most in countably many points. Therefore, for each $1 \leq i \leq N$, we have $\psi_i' \in \mathcal{J}$ up to a countable set. This implies that $\sum_{i=1}^k (\psi_i')^s \in \mathcal{J}$ for every $1 \leq k \leq N$ and any $s>0$, which shows that $T$ is an $a$-convex transformation satisfying condition $(C)$ for all $s > 0$.
    
We now show that $T^n$ is expansive for some $n \in \mathbb{N}$. Notice that either $T(\beta) = \beta$ or $\psi_{N^{*}}'(\beta^{-}) = 0$. The second case satisfies our requirement. Suppose $T(\beta) = \beta$. By convexity, we have
    \begin{align*}
        1< \dfrac{\beta}{\beta-a_{N^{*}-1}}\leq T'(\beta^{-})=(\psi_{N^{*}}'(\beta^{-}))^{-1},
    \end{align*}since $T_{N^{*}}(a_{N^{*}-1})=0$. Hence, $\psi_{N^{*}}(\beta^{-})<1$.  By Remark \ref{SomeIterationExpansive}, 
 there exists $n_0 \in \mathbb{N}$ and $\alpha>1$ such that $(T^{n_0})' > \alpha $.  Therefore, $T$ satisfies condition $(B)$ by \cite[Lemma 2]{hk1} and equation \eqref{Msmeasure} applied to $\chi_{I_{(d_{i})_{i=1}^{r}}}$ the characteristic function of any cylinder $I_{(d_{i})_{i=1}^{r}}$.
\end{example}
\begin{remark}\label{SomeIterationExp2} \normalfont
    Now, we discuss why $\psi_{N^{*}}'(\beta) \geq 1$ implies that the point $\beta^{-}$ is indifferent. First, since $\sum^{N^{*}}_{i}\psi_{i}'\in \mathcal{J}$ it implies that $\sum_{i=1}^{N^{*}} \psi_{i}'(x) \geq 1$ for any $x \in [0,\beta]$. Integrating both sides with respect to Lebesgue measure, we obtain for any $x \in [0,\beta]$:
\begin{align}\label{QQ}
    \sum^{N^{*}}_{i=1}(\psi_{i}(x)-a_{i-1})\geq x.
\end{align} Hence, we have $\psi_{N^{*}}(\beta) \geq \beta$ when evaluated at $x = \beta$. Since $[0, \beta]$ is forward invariant by equation \eqref{ZeroBetaForwardInvariant} we must have equality on inequality \eqref{QQ}. So $\lim_{x\rightarrow \beta^{-}}T_{N^{*}}(x)=\beta$, and $\sum_{i=1}^{N^{*}} \psi_{i}'(\beta) = 1$. Be warned that we can have that $T$ is not continuous at $\beta$, so $T(\beta^{-})$ may differ from $T(\beta^{+})$. Therefore $\psi_{N^{*}}'(\beta^{-}) = 1$, meaning that $\beta^{-}$ is an indifferent fixed point. Furthermore, by equation \eqref{Perron-Frobenius} $F_{1} \chi_{[0,\beta]} = \chi_{[0,\beta]}$, so $\beta^{-1} \chi_{[0,\beta]}$ is the density of an ACIM for the Lebesgue measure. 

This argument also implies that $\psi_{N^{*}}'(x) <1$ for all $x \in [a_{N^{*}-1}, \beta)$. Indeed, if there exists an $a_{N^{*}-1} \leq x < \beta$ such that $\psi_{N^{*}}(x) = 1$, then we must have $\psi_{i}'(x) = 0$ for every $i < N^{*}$. In this case, $T_{i}[a_{i-1},a_{i}] \subset [0,x)$, which contradicts the forward invariance of $[0,\beta]$.
\end{remark}
 Define $\mathcal{P} = \{[a_{k-1}^{+}, a_{k}^{-}]\}_{k=1}^{N}$. For each $n \in \mathbb{N}$, the partition $\mathcal{P}^{n} = \bigvee_{i=0}^{n-1} T^{-i} \mathcal{P}$ is the finite partition of $\overline{[0,1]}$ into intervals on which $T^{n}$ is monotone and continuous, where $\bigvee$ denotes the join operator for partitions. Recall that $\mathcal{P}$ is said to be a generating partition for $m_{s}$ if $\bigvee_{i=0}^{\infty} T^{-i} \mathcal{P}$ is the partition into points of $\overline{[0,1]}$ up to zero $m_{s}$-measure. Typically, the argument that $\mathcal{P}$ is a generating partition is established by the existence of an expansive iteration $T^{k}$ (Lemma \cite[Lemma 2]{hk1}), a property that we do not necessarily have when $\beta^{-}$ is indifferent.

\begin{proposition}\label{IKPartition}
    Suppose $T$ satisfies condition $(B)$ and $(C)$ for some $s > 0$. Then, either $m_{s}\in\{\delta_{\beta^{-}},\delta_{\beta^{+}}\}$, or $\mathcal{P}$ is a generating partition for $m_{s}$.
    \end{proposition}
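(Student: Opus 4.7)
My plan is to prove the contrapositive: if $\mathcal{P}$ is not generating for $m_s$, then $m_s\in\{\delta_{\beta^-},\delta_{\beta^+}\}$. Write $\mathcal{P}^n:=\bigvee_{i=0}^{n-1}T^{-i}\mathcal{P}$ and, for each $x\in\overline{[0,1]}$, let $\mathcal{P}^n(x)$ denote the atom through $x$ and $\hat{\mathcal{P}}(x):=\bigcap_{n\ge 0}\mathcal{P}^n(x)$ the atom of the limit partition. Then $\mathcal{P}$ generates for $m_s$ precisely when the set $U:=\{x\in\overline{[0,1]}:\hat{\mathcal{P}}(x)\neq\{x\}\}$ has $m_s$-measure zero, so the goal is $m_s(U)=0$ under the standing assumption that $m_s$ is not Dirac at $\beta^\pm$.

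The first step converts condition $(B)$ into a pointwise size estimate on cylinders through the conformal identity \eqref{Msmeasure}. Applying $m_s(F_s^n f)=\gamma^n m_s(f)$ to $f=\chi_{I_{(d_i)_{i=1}^r}}$ and choosing $n$ to realize the infimum in $(B)$ gives
\begin{align*}
m_s(I_{(d_i)_{i=1}^r})\le \gamma^{-n}\|F_s^n\chi_{I_{(d_i)_{i=1}^r}}\|_\infty\le M_r
\end{align*}
for every cylinder of length $r$ not containing $\beta$. Because $T$ is order-preserving on each branch, every $\mathcal{P}^n$-atom is an interval in the order topology of $\overline{[0,1]}$, and so is every $\hat{\mathcal{P}}$-atom. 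The space $\overline{[0,1]}$ is separable (a countable dense set is, e.g., the rationals of $[0,1]\setminus W$ together with the pairs $\{y^-,y^+\}$ for $y\in W$), so any pairwise disjoint family of non-degenerate intervals is countable. Hence $U$ decomposes as a countable disjoint union of non-degenerate $\hat{\mathcal{P}}$-atoms.

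Next, if $J$ is any such non-degenerate atom with $\beta^-,\beta^+\notin J$, then the decreasing sequence $\mathcal{P}^r(J)$ of intervals eventually avoids $\beta$, and the previous estimate yields $m_s(J)\le m_s(\mathcal{P}^r(J))\le M_r\to 0$, whence $m_s(J)=0$. Therefore $m_s(U)$ is concentrated on at most the two atoms $\hat{\mathcal{P}}(\beta^\pm)$. For $\beta^-$ this contribution disappears unconditionally: since $T\beta^-=\beta^-$, the atom $\hat{\mathcal{P}}(\beta^-)$ is $T$-forward invariant, and by Remark \ref{SomeIterationExp2} the map $T$ is strictly expanding on $(a_{N^*-1},\beta^-)$ with unique fixed point $\beta^-$; any $y\in\hat{\mathcal{P}}(\beta^-)\setminus\{\beta^-\}$ would generate a strictly decreasing forward orbit bounded in $[a_{N^*-1},\beta^-]$ and hence converging to a fixed point, necessarily $\beta^-$, contradicting the strict decrease. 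So $\hat{\mathcal{P}}(\beta^-)=\{\beta^-\}$ is degenerate.

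The main obstacle I anticipate is the analogous analysis of $\hat{\mathcal{P}}(\beta^+)$ when $\beta<1$ and $\beta^+$ is not $T$-fixed, since the atom is then not automatically forward invariant. My plan is to combine the eigenmeasure equation $\gamma^{-1}F_s^*m_s=m_s$ with the monotonicity of the $(N^*+1)$-th branch on $(\beta^+,a_{N^*+1}^-)$: if $\hat{\mathcal{P}}(\beta^+)$ is a non-degenerate interval carrying positive $m_s$-mass, then iterated pull-backs together with the conformal identity should force all of $m_s$ to concentrate at the single point $\beta^+$, producing $m_s=\delta_{\beta^+}$ and contradicting the hypothesis. Combining the three cases gives $m_s(U)=0$, so $\mathcal{P}$ is a generating partition for $m_s$ as claimed.
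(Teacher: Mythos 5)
Your reduction is essentially the paper's argument, reorganized: you use the conformal identity \eqref{Msmeasure} to turn condition $(B)$ into the bound $m_s(I_{(d_i)_{i=1}^r})\le M_r$ for cylinders avoiding $\beta$, and you use the strict expansion of $T_{N^*}$ below $\beta$ (equivalently, the preimage sequence $w_n\uparrow\beta$ that the paper constructs) to show that every limit atom away from $\beta^{\pm}$ is $m_s$-null and that the atom of $\beta^-$ collapses to $\{\beta^-\}$. Those steps are correct, and the observation that the non-degenerate limit atoms form a countable disjoint family of order-intervals is a clean way to package them. Two caveats on this part: your claim that $\hat{\mathcal{P}}(\beta^-)$ is forward invariant rests on $T\beta^-=\beta^-$, which by Remark \ref{SomeIterationExp2} holds in the indifferent case but need not hold when $\psi_{N^*}'(\beta^-)<1$; in the latter case you should instead invoke the expansive iterate and Lemma \ref{Lema2HK1}, which is exactly the case split the paper makes at the start of its proof.

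The genuine gap is the one you flag yourself: the atom of $\beta^+$. Having chosen the contrapositive, you are obliged to prove that if $\hat{\mathcal{P}}(\beta^+)$ is a non-degenerate atom of positive $m_s$-measure then $m_s$ equals $\delta_{\beta^+}$ (or $\delta_{\beta^-}$), and for this you offer only an intention (``iterated pull-backs together with the conformal identity \emph{should} force all of $m_s$ to concentrate at $\beta^+$'') with no mechanism. Nothing you have established rules out the scenario where $m_s$ places, say, half its mass on a fat interval $[\beta^+,c]$ on which every iterate of $T$ is monotone and the other half elsewhere; there $\mathcal{P}$ fails to generate yet $m_s$ is not a Dirac measure, so the implication you need is precisely the content still to be supplied. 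The paper avoids carrying this burden by arguing in the other direction: it fixes $x\notin\{\beta^+,\beta^-\}$, shows $m_s(\mathcal{P}^r(x))\le M_r$ for large $r$, and concludes under the standing alternative $m_s\notin\{\delta_{\beta^-},\delta_{\beta^+}\}$ --- the disjunction in the statement exists exactly because condition $(B)$ gives no control over the cylinders adjacent to $\beta$. Until you either prove the missing implication or restructure the argument as the paper does, the proof is incomplete at this point.
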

\begin{proof}
Let $r \in \mathbb{N}$ and $(d_{i})_{i=1}^{r} \in \{1, \dots, N\}^{r}$. Equation \eqref{Msmeasure} implies that for every $n \in \mathbb{N}$,
    \begin{align}\label{Ecu538}
        m_{s}(I_{(d_{i})_{i=1}^{r}})& =m_{s}(\overline{F_{s}}^{n}(\chi_{I_{(d_{i})_{i=1}^{r}}})).
    \end{align}Since $T$ satisfies condition $(B)$, for any $I_{(d_{i})_{i=1}^{r}}$ not containing $\beta$, we have \begin{align*}
        m_{s}(I_{(d_{i})_{i=1}^{r}})\leq M_{r}.
    \end{align*}Notice that it is enough to consider the case where $\lim_{x \to \beta^{-}} \psi_{N^{*}}(x) = 1$. If $T$ has an expansive iteration we can choose $\{M_{r}\}_{r \in \mathbb{N}}$ to decrease exponentially. On the other hand, assume that $\beta^{-}$ is an indifferent fixed point.
    
Define $w_{0} = 0$; for every $n \in \mathbb{N}$, define recursively $w_{n+1}$ by the equation $w_{n} = T_{N^{*}}(w_{n+1})$. Since $T' > 1$ over $[a_{N^{*}-1}, \beta)$, the sequence $\{w_{n}\}_{n \in \mathbb{N}}$ is strictly increasing and converges to $\beta$. Let $x \in \overline{[0,1]} \setminus \{\beta^{+}, \beta^{-}\}$. Then, either there exists $n \in \mathbb{N}$ such that $x \leq w_{n}$, or $x > \beta$. Denote by $I_{(d_{i})_{i=1}^{r}}(x)$ the cylinder of length $r$ containing $x$. If $r \geq n$, then by equation \eqref{Ecu538},
\begin{align*}
        m_{s}(I_{(d_{i})_{i=1}^{r}}(x)) \leq M_{r}.
    \end{align*}
The result follows whenever $m_{s}\notin \{ \delta_{\beta^{+}},\delta_{\beta^{-}}\}$.  
    \end{proof}
Thus, unless $m_{s}\in \{\delta_{\beta^{+}},\delta_{\beta^{-}}\}$, condition $(B)$ implies $m_{s}(\{x\}) = 0$ for all $x \in \overline{[0,1]}$. We can go further and say that, in case $m_{s}=\delta_{\beta^{+}}$, we have $\psi_{N^{*}+1}'(\beta^{+})=1$ and $\psi_{N^{*}}'(\beta^{-})=1$. This implies that $T'$ is continuous at $\beta$, so $\beta$ is an indifferent fixed point by Remark \ref{SomeIterationExpansive}.  Since the set $W$ is countable, we conclude that $m_{s} \in \mathcal{M}_{1}([0,1])$ whenever $\psi_{N^{*}}(\beta^{-})<1$ or $\psi_{N^{*}+1}'(\beta^{+})=1$. This explains assumptions of theorem \ref{PrincipalThmChap52}. 

\begin{example}\label{ConditionBExamplesInfFixPt}

    Consider $T$, the $a$-convex transformation from Example \ref{ExampleParabolicAconvex}. Let $w_1 = 1/3$ and $w_0 = 0$. For each $r \in \mathbb{N}$, define $w_{r+1}$ recursively by the equation $w_r = T_2(w_{r+1})$. Since $T_2$ is increasing, for any $x \in [w_r, w_{r+1})$, we have that $T^i x \in [w_{r-i}, w_{r+1-i})$ for all $1 \leq i \leq r$. Thus, the sequence $\{w_r\}_{r \in \mathbb{N}}$ is monotone increasing converging to 1.
    
    Moreover, there exist constants $C_3, C_4 > 0$ such that for every $x \in [w_r, w_{r+1})$ (see \cite[Proposicion 5.1.1]{mf}), we have
    \begin{align}\label{InequalityExampleParabolic}
        \dfrac{r^{2}}{C_{4}}\leq (T^{r})'(x) \leq \dfrac{r^{2}}{C_{3}}.
    \end{align}
    Recall that $T_1'$ is strictly increasing, and $T_2'(1/3^+) = 1/2 = T_1'(0)$. By inequality \eqref{InequalityExampleParabolic}, for any $x \leq w_{r+1}$, we obtain
    \begin{align*}
        \dfrac{r^{2}}{C_{4}} \leq (T^{r})'(w_{r+1}^{+}) \leq  (T^{r})'(x).
    \end{align*} 
    Let $r\in\mathbb{N}$ and let $I_{(a_i)_{i=1}^r}$ be any cylinder that does not contain $1$, i.e., $I_{(a_i)_{i=1}^r}\neq [w_r^+,1]$. By equation \eqref{Msmeasure},
    \begin{align*}
         m_{s}(I_{(a_{i})^{r}_{i=1}})=\int \overline{F}^{r}_{s}\chi_{I_{(a_{i})^{r}_{i=1}})}dm_{s}\leq \sup \||(T^{r})'|^{s}\gamma^{r}\|^{-1}_{\infty}\leq \dfrac{C_{4}}{r^{2s}\gamma{r}}.
    \end{align*} Recall that, condition $(C)$ implies $1=\sum^{N}_{i=1}(\psi_{i}'(1))^{s}\leq \gamma$. Defining $M_r = \frac{C_4}{r^2}$, we see that $T$ satisfies conditions $(B)$ and $(C)$ for $0 < s \leq 1$ whenever $m_s \neq \delta_1$. Analogous results can be shown for Example \ref{ExampleParabolicNonMarkovianAconvex}.
\end{example}

\section{Spectral Decomposition of the operator $\overline{F}_{s}$.}
In this section, we study the space of $\overline{F_s}$-invariant functions. For the remainder of the paper, we assume that $T$ satisfies the hypotheses of Theorem \ref{PrincipalThmChap52}.
\begin{definition}\label{BoundedVDefinition}
    We say that a function $f:[a,b]\rightarrow \R$ has \textit{bounded variation} on $[a,b]$ if there exists $M>0$ such that for any finite set $\{x_0, x_1, \dots, x_n\} \subset [a,b]$, the following inequality holds
        \begin{align*}
            \sum^{n}_{k=0}|f(x_{k})-f(x_{k+1})|\leq M.
        \end{align*}
    The \textit{variation} of $f$ on $[a,b]$ is defined by
        \begin{align*}
            V_{[a,b]}f=\sup\left\{\sum^{n-1}_{k=0}|f(x_{k})-f(x_{k+1})|: \text{$\{x_{0},x_{1},...,x_{n}\}\subset [a,b]$}\right\}.
        \end{align*}
\end{definition} 
\begin{proposition}\cite[Theorem 2.3.1.]{BG}\label{boundedIneq}
    If $f$ has bounded variation on $[a,b]$, then for all $x\in [a,b]$
        \begin{align*}
            |f(x)|\leq |f(a)|+V_{[a,b]}f.
        \end{align*}
    Furthermore, for any probability measure $\mu$ and for all $x \in [0,1]$, we have
    \begin{align*}
        |f(x)|\leq V_{[0,1]}f+\|f\|_{L^{1}_{\mu}}.
    \end{align*}
\end{proposition}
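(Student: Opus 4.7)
The plan is to handle the two inequalities in turn, deducing the second one from the first by an averaging argument against the probability measure $\mu$.

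For the first inequality, I would start from the triangle inequality
\begin{align*}
|f(x)| \leq |f(a)| + |f(x)-f(a)|,
\end{align*}
and then observe that the two-point partition $\{a,x\}$ of $[a,b]$ (or $\{a,x,b\}$ if one wants every interval endpoint) contributes the single difference $|f(a)-f(x)|$ to the sum defining $V_{[a,b]}f$. Taking the supremum over such partitions yields
\begin{align*}
|f(x)-f(a)| \leq V_{[a,b]}f,
\end{align*}
which combined with the triangle inequality gives the claim. This is essentially a one-line unraveling of the definition in Definition \ref{BoundedVDefinition}.

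For the second inequality, the key is that the first inequality is symmetric in the role of the reference point: applied to a generic anchor $y \in [0,1]$ instead of the left endpoint, the same reasoning (with partition $\{\min(x,y),\max(x,y)\}$ of $[0,1]$) yields
\begin{align*}
|f(x)| \leq |f(y)| + V_{[0,1]}f \quad \text{for all } x,y \in [0,1].
\end{align*}
Now I would integrate both sides against $\mu$ in the variable $y$. Since $\mu$ is a probability measure, the left side integrates to $|f(x)|$ and the variation term integrates to $V_{[0,1]}f$, while the first term on the right becomes $\|f\|_{L^{1}_{\mu}}$. This gives the stated bound.

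There is no real obstacle here; the only subtlety is making sure $f$ is $\mu$-measurable so that $\|f\|_{L^{1}_{\mu}}$ makes sense, which follows from the fact that any function of bounded variation on a compact interval is Borel-measurable (being the difference of two monotone functions by the Jordan decomposition). The entire argument is elementary and uses nothing beyond the definition of $V_{[a,b]}f$ and the fact that $\mu$ has total mass one.
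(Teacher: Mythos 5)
Your argument is correct and is exactly the standard proof of this fact (the paper simply cites \cite[Theorem 2.3.1]{BG} rather than proving it): the first bound follows from the two-point partition $\{a,x\}$ and the triangle inequality, and the second follows by replacing the anchor $a$ with a generic $y$ and integrating in $y$ against the probability measure $\mu$. Your remark on measurability via the Jordan decomposition is a fine, if optional, touch; nothing further is needed.
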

Therefore, the set of bounded variation functions, $BV[0,1]$, is a subspace of $L^1_\mu([0,1])$ for any probability measure $\mu$. This motivates us to look for a fixed point of $\overline{F_{s}}$ in the subspace $BV[0,1]$ (see also \cite{ly,BG}).

Recall that a real-valued function $g$ is said to be a density for $m_s$ if some $f$ in the class of $g$ satisfies $f\geq 0$ on a set of full measure and $m_{s}(f)=1$. To prove the existence of an ACIM for $m_s$, it is sufficient to take a suitable bounded variation function $f$ such that the iterations $\{\overline{F}_s^n f\}_{n \in \mathbb{N}}$ are contained in $BV[0,1]$, and the sequence $\{V_{[0,1]} \overline{F}_s^n f\}_{n \in \mathbb{N}}$ has an upper bound. Then, by applying Helly's First Theorem (see \cite[Theorem 2.3.9]{BG}), we show that $\{\overline{F}_s^n f\}_{n \in \mathbb{N}}$ is precompact in $L^1_{m_s}([0,1])$. The limit point $g$ of any convergent subsequence will be a fixed point of $\overline{F}_s$. Therefore, equation \eqref{Msmeasure} implies the existence of a $T$-invariant measure, absolutely continuous for $m_s$.

\begin{proposition}\label{LasotaYorkeInequality}
For every $f \in \mathcal{J}$, we have that $\overline{F}_s f \in \mathcal{J}$. Furthermore, for each $\alpha$ satisfying $\frac{1}{T'(0)} < \alpha < 1$, there exists a positive constant $b(\alpha)$ such that for all $f \in \mathcal{J}$,
    \begin{align}
            V_{[0,1]}\overline{F}_{s}f \leq \alpha V_{[0,1]}f+b \|f\|_{L^{1}_{m_{s}}}.
    \end{align}
\end{proposition} 
The proof of this proposition is analogous to \cite[Lemma 2.2]{bmss} and \cite[Lemma 2.4]{bmss} where the crucial property is $\overline{F}_{s}\mathds{1}\in \mathcal{J}$ instead of $\overline{F}_{1}\mathds{1}\in \mathcal{J}$. Inequality \eqref{LasotaYorkeInequality} is commonly referred to as a \textit{Lasota-Yorke type inequality}.

\begin{remark}\label{PHulseRemark}
We now explain the similarities between $a$-convex transformations and attractive $g$-functions considered in \cite{hu}. Let $(\Sigma,\sigma)=(\{1,...,N\}^{\N},\sigma)$ denote the fullshift on $N$ symbols. We write $x=(x_{i})_{i\geq 0}$ for elements in $\Sigma$. For each $1\leq j \leq N$, denote $[j]$ the set of points $x\in \Sigma$ such that $x_{0}=j$, and, for any $x\in \Sigma$ denote $\sigma^{-1}(x)\cap [j]$ by $jx$. We define a partial order on $\Sigma$ given by $x\leq y$ whenever $x_{i}\leq y_{i}$ for all non-negative integers $i$.

A function $g:\Sigma \rightarrow \R$ is said to be \textit{attractive} if there exists $\delta>0$ such that for all $x\in \Sigma$, we have $g(x)\geq \delta$, $\sum_{i=1}^{N}g(ix)=1$, and, for each $1\leq k \leq N$, the function $\sum^{N}_{i=k}g(ix)$ is non-decreasing. We obtain an analog condition to $\sum^{k}_{i=1}\psi_{i}'\in \mathcal{J}$ since 
    \begin{align*}
        \sum^{k}_{i=1}g(ix)=1-\sum^{N}_{i=k+1}g(ix),
    \end{align*} is non-increasing, where $g(ix)$ plays the role of $\psi_{i}'$.
  
Hulse gave a condition in \cite[Theorem 2.2]{hu} that guarantees uniqueness of $g$-measures (equilibrium states) for attractive $g$-functions. In the context of $a$-convex transformations, the analogous condition is: for each $1 \leq k \leq N$,
 \begin{align}\label{HulseCondition}
     \lim_{n\rightarrow \infty}\left[\overline{F}^{n}_{s}\chi_{[0,a_{k}]}(0)-\overline{F}_{s}^{n}\chi_{[0,a_{k}]}(1)\right]=0.
 \end{align} Since $\overline{F}_s^n \chi_{[0, a_i]} \in \mathcal{J}$, equation \eqref{HulseCondition} is stronger than Condition $(B)$, but harder to verify. In contrast, condition $(B)$ focuses on how $|(T^n)'|^{-1}$ decreases to zero near indifferent fixed points. Furthermore, attractive $g$-functions require the constant function $\mathds{1}$ to be the eigenfunction of the transfer operator. 
\end{remark}
As a consequence of Proposition \ref{LasotaYorkeInequality} and the fact that $m_{s}$ is conformal, we obtain Theorem \ref{ACIM}.
\begin{theorem}\label{ACIM}
     There exists a $T$-invariant measure $\mu_{s}$ absolutely continuous with respect to $m_{s}$. Furthermore, the density $g = \frac{d\mu_{s}}{dm_{s}}$ may be chosen from $\mathcal{J}$.
\end{theorem}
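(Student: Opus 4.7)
The plan is to follow the classical Lasota--Yorke strategy, adapted to the conformal measure $m_{s}$: produce a density $g\in\mathcal{J}$ that is a fixed point of $\overline{F}_{s}$ by extracting an $L^{1}_{m_{s}}$-limit of Cesàro averages, and then convert this invariance into $T$-invariance of $d\mu_{s}=g\,dm_{s}$ via the conformal relation \eqref{Msmeasure}.

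Concretely, start with $f=\mathds{1}\in\mathcal{J}$ and define
\[g_{n}=\frac{1}{n}\sum_{k=0}^{n-1}\overline{F}_{s}^{k}\mathds{1}.\]
By Proposition \ref{VarInequality} the cone $\mathcal{J}$ is invariant under $\overline{F}_{s}$, so each iterate $\overline{F}_{s}^{k}\mathds{1}$ lies in $\mathcal{J}$, and so does every convex combination $g_{n}$. Iterating the Lasota--Yorke inequality \eqref{LasotaYorkeInequality}, and using Proposition \ref{PmarkovOp} to bound $\|\overline{F}_{s}^{k}\mathds{1}\|_{L^{1}_{m_{s}}}\le 1$, gives
\[V_{[0,1]}\overline{F}_{s}^{k}\mathds{1}\le \alpha^{k}V_{[0,1]}\mathds{1}+\frac{b(\alpha)}{1-\alpha}=\frac{b(\alpha)}{1-\alpha}.\]
Averaging, $V_{[0,1]}g_{n}\le b(\alpha)/(1-\alpha)$ and $\|g_{n}\|_{L^{1}_{m_{s}}}=1$ for every $n$, and by Proposition \ref{boundedIneq} the $g_{n}$ are also uniformly bounded in $\|\cdot\|_{\infty}$.

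Helly's First Theorem then yields a subsequence $\{g_{n_{k}}\}$ converging pointwise on $[0,1]$ to some $g\in BV[0,1]$; the uniform $L^{\infty}$-bound combined with dominated convergence promotes this to convergence in $L^{1}_{m_{s}}$. Since $\mathcal{J}$ is closed under pointwise (hence $L^{1}$) limits, $g\in\mathcal{J}$, and $\|g\|_{L^{1}_{m_{s}}}=1$, so $g$ is a density. To verify $\overline{F}_{s}g=g$, note the telescoping identity
\[\overline{F}_{s}g_{n}-g_{n}=\frac{1}{n}\bigl(\overline{F}_{s}^{n}\mathds{1}-\mathds{1}\bigr),\]
whose $L^{1}_{m_{s}}$-norm is at most $2/n\to 0$. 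Combined with the $L^{1}_{m_{s}}$-contractivity of $\overline{F}_{s}$ from Proposition \ref{PmarkovOp}, passing to the subsequence yields $\overline{F}_{s}g=g$ in $L^{1}_{m_{s}}$.

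Finally, from the definition of $F_{s}$ together with $T\circ\psi_{i}=\mathrm{id}$ on the appropriate domain one obtains the algebraic identity $\overline{F}_{s}((h\circ T)\,g)=h\cdot\overline{F}_{s}g$. Combining this with the $\overline{F}_{s}^{*}$-invariance of $m_{s}$ from \eqref{Msmeasure} gives, for every Borel $A\subseteq[0,1]$,
\[\mu_{s}(T^{-1}A)=\int(\chi_{A}\circ T)\,g\,dm_{s}=\int\chi_{A}\,\overline{F}_{s}g\,dm_{s}=\int\chi_{A}\,g\,dm_{s}=\mu_{s}(A),\]
so $\mu_{s}$ is $T$-invariant, absolutely continuous with respect to $m_{s}$, and has density in $\mathcal{J}$. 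The main delicacy is using Cesàro averages rather than raw iterates: the bare sequence $\{\overline{F}_{s}^{n}\mathds{1}\}$ has bounded variation, but any subsequential limit need not be $\overline{F}_{s}$-invariant; the telescoping estimate above is what forces invariance of the averaged limit once continuity of $\overline{F}_{s}$ on $L^{1}_{m_{s}}$ is available.
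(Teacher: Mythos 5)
Your proposal is correct and follows essentially the same route the paper takes (and attributes to \cite[Theorem 1.1]{bmvs}): the cone invariance and Lasota--Yorke inequality of Proposition \ref{VarInequality}, the uniform variation bound, Helly's First Theorem for $L^{1}_{m_{s}}$-precompactness, and the conformality relation \eqref{Msmeasure} to convert $\overline{F}_{s}$-invariance of the density into $T$-invariance of $\mu_{s}$. Your explicit use of Ces\`aro averages with the telescoping estimate is the standard way to make the fixed-point step rigorous, which the paper's sketch leaves implicit.
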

\begin{proof}
    Applying Proposition \ref{LasotaYorkeInequality} iteratively to the constant function $\mathds{1}$, we obtain for every $n\in \N$
        \begin{align*}
            V_{I}\overline{F}_{s}^{n}1  \leq \alpha V_{I}\overline{F}^{n-1}_{s}1 + b \leq \cdots \leq \dfrac{b}{1-\alpha}.
        \end{align*} Consider $g_{n}=\frac{1}{n}\sum^{n-1}_{r=0}\overline{F}_{s}^{r}1$. Notice first that $g_{n}\in \mathcal{J}$ since each $\overline{F}_{s}^{r}1\in \mathcal{J}$. Thus, for every $n\in \N$ we have $\|g_{n}\|_{L^{1}_{m_{s}}}=1$ by Proposition \ref{MarkovOp}. We obtain $\|\overline{F}_{s}g_{n}-g_{n}\|_{L^{1}_{m_{s}}}\leq \frac{2}{n}\longrightarrow 0$ as $n \rightarrow \infty$. By Theorem \ref{boundedIneq}
        \begin{align*}
            \|g_{n}\|_{\infty} 
             \leq \frac{1}{n}\sum^{n-1}_{k=0} P^{k}_{s}1(0)+V_{I}P^{k}1 \leq \frac{1}{n}\sum^{n-1}_{k=0} \|P^{k}_{s}1\|_{L^{1}_{m_{s}}}+\dfrac{b}{1-\alpha} = 1+\dfrac{b}{1-\alpha}.
        \end{align*}Then $\{g_{n}\}_{n\in \N}$ converges a bounded variation function $g^{*}$ by Helly's first theorem (see \cite[Theorem 2.3.9]{BG}). Furthermore
        \begin{align*}
            V_{I}g^{*}\leq 1+\dfrac{b}{1-\alpha}.
        \end{align*} Taking a subsequence if necessary, $g^{*}=\lim_{k\rightarrow \infty}g_{n_{k}}$, $g^{*}\geq 0$, $\overline{F}_{s}g^{*}=g^{*}$ and $\|g^{*}\|_{L^{1}_{m_{s}}}=1$. Furthermore $g^{*}\in \mathcal{J}$ for some version in $L^{1}_{m_{s}}$. Defining $\mu_{s}=g^{*}m_{s}$ we claim that $\mu_{s}$ is $T$-invariant. Indeed, for every measurable set $A$
        \begin{align*}
            \mu_{s}(T^{-1}A)  = \int_{T^{-1}(A)}g^{*}dm_{s}=\int_{A}\overline{F}_{s}g^{*}dm_{s}=\int_{A}g^{*}dm_{s}=\mu_{s}(A).
        \end{align*}
    \end{proof}

\begin{lemma}\label{Lemma4.4}
    Let $g$ be the $F_{s}$-invariant function given by Theorem \ref{ACIM}, and $A=\int^{\beta}_{0}gdm_{s}$. Define 
        \begin{align*}
            g_{\beta}(x)=\begin{cases}
                g(x)/A & \text{if $0\leq x\leq \beta,$}\\
                0 & \text{if $\beta<x\leq 1$.}
            \end{cases}
        \end{align*}
    Then $g_{\beta}$ is $\overline{F}_{s}$-invariant.
    \end{lemma}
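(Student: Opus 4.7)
The plan is to prove the pointwise inequality $\overline{F}_{s} g_{\beta} \leq g_{\beta}$ on $\overline{[0,1]}$, and then upgrade it to $m_{s}$-a.e.\ equality via a conservation argument using that $\overline{F}_{s}$ is a Markov operator on $L^{1}_{m_{s}}([0,1])$. By Proposition \ref{IKPartition} and the discussion following it, the standing hypotheses of Theorem \ref{PrincipalThmChap52} yield $m_{s}\in \mathcal{M}_{1}([0,1])$, so Proposition \ref{PmarkovOp} applies.

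To set up the branch analysis in the extended space $\overline{[0,1]}$, I observe that for $i > N^{*}$, the range of $\psi_{i}$ is contained in $[a_{i-1}^{+}, a_{i}^{-}]\subseteq (\beta, 1]$, so $g_{\beta}\circ \psi_{i}\equiv 0$. Consequently, for every $x\in \overline{[0,1]}$,
\begin{align*}
\overline{F}_{s} g_{\beta}(x) = \gamma^{-1}\sum_{i=1}^{N^{*}}(\psi_{i}'(x))^{s}\, g_{\beta}(\psi_{i}(x)) = \gamma^{-1}A^{-1}\sum_{i=1}^{N^{*}}(\psi_{i}'(x))^{s}\, g(\psi_{i}(x)),
\end{align*}
where the second equality uses $\psi_{i}(x)\in [0,\beta]$ for $i\leq N^{*}$. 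For $x\in[0,\beta]$, the missing tail of $\overline{F}_{s} g(x)$ is non-negative, so
\begin{align*}
\overline{F}_{s} g_{\beta}(x) \leq A^{-1}\overline{F}_{s} g(x) = A^{-1} g(x) = g_{\beta}(x).
\end{align*}
For $x\in(\beta,1]$, the forward invariance of $[0,\beta]$ from equation \eqref{ZeroBetaForwardInvariant} implies $T_{i}([a_{i-1}, a_{i}))\subseteq [0,\beta]$ for every $i\leq N^{*}$, so $x$ lies outside the image of each low-index $T_{i}$, the extension $\psi_{i}$ is locally constant at $x$, and $\psi_{i}'(x)=0$. Thus $\overline{F}_{s} g_{\beta}(x)=0=g_{\beta}(x)$ on $(\beta,1]$.

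To conclude, I compare the $m_{s}$-integrals: $\int g_{\beta}\, dm_{s} = A^{-1}\int_{0}^{\beta} g\, dm_{s} = 1$, and by Proposition \ref{PmarkovOp}(1), $\int \overline{F}_{s} g_{\beta}\, dm_{s}=\int g_{\beta}\, dm_{s}=1$. Hence the non-negative function $g_{\beta}-\overline{F}_{s} g_{\beta}$ has zero $m_{s}$-integral and vanishes $m_{s}$-almost everywhere, giving $\overline{F}_{s} g_{\beta}=g_{\beta}$.

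The main delicacy is the careful bookkeeping at the endpoint $\beta=a_{N^{*}}$ in $\overline{[0,1]}$: because the branch $T_{N^{*}+1}$ is defined on $[\beta^{+}, a_{N^{*}+1}^{-}]$, one has $\psi_{N^{*}+1}(x)\geq \beta^{+}$ strictly for every $x$, guaranteeing that $g_{\beta}$ vanishes on the image of each high-index $\psi_{i}$. Combined with the forward invariance of $[0,\beta]$ for the low branches, this is what underpins the clean splitting of $\overline{F}_{s} g_{\beta}$ used above.
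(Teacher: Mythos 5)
Your argument is correct. Note that the paper does not prove this lemma at all---it is imported by citation from \cite[Lemma~4.4]{bmvs}, where the corresponding statement is established for the Perron--Frobenius operator by a dual, measure-theoretic version of exactly your argument: forward invariance of $[0,\beta]$ (equation \eqref{ZeroBetaForwardInvariant}) gives $\int_A \overline{F}_s(g\chi_{[0,\beta]})\,dm_s=0$ for $A\subseteq(\beta,1]$ and $\int_A \overline{F}_s(g\chi_{[0,\beta]})\,dm_s\le\int_A g\,dm_s$ for $A\subseteq[0,\beta]$, and mass conservation forces equality. Your pointwise branch-by-branch version buys the same conclusion and is self-contained in the setting of $\overline{F}_s$. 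Two small remarks. First, the bookkeeping at $\beta^{+}$ in $\overline{[0,1]}$ is not actually needed: wherever $\psi_{i}$ is clamped to an endpoint of $[a_{i-1},a_i]$ it is locally constant, so $(\psi_i')^{s}=0$ there; hence the terms with $i>N^{*}$ vanish identically regardless of how $g_\beta$ is valued at $\beta^{+}$. Second, your appeal to Proposition \ref{PmarkovOp} requires $m_s(\{x\})=0$ for all $x$, i.e.\ excluding $m_s\in\{\delta_{\beta^-},\delta_{\beta^+}\}$; you correctly tie this to the standing hypotheses via Proposition \ref{IKPartition}, and the invariance $\overline{F}_s g=g$ used on $[0,\beta]$ holds only $m_s$-a.e., which is all the conservation step needs. (The lemma also tacitly assumes $A>0$; that is an implicit hypothesis of the statement rather than a gap in your proof.)
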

The proof of Lemma \ref{Lemma4.4} is essentially the same as in \cite[Lemma 4.4]{bmss}, since it only requires the operator properties of $\overline{F}_{s}$ and Proposition \ref{MarkovOp}.

For what is left of this section, we will study the non-empty set of $T$-invariant and absolutely continuous measures with respect to $m_{s}$.
\begin{lemma}\cite[Lemma 3]{hk1}\label{SupportMS}
     Suppose $m_{s}\neq \delta_{\beta}$. Let 
     \begin{align*}
         Y=\bigcap\{F:\text{$F\subseteq [0,1]$ closed with $m_{s}(F)=1$}\}
     \end{align*} be the support of $m_{s}$. Then $T$ can be modified at a finite number of points to satisfy $T(Y) \subseteq Y$ and $T^{-1}\{x\} \subseteq Y$ for all $x \in Y$ except for finitely many points. Furthermore, $Y$ is a compact ordered set, and every non-trivial interval $J \subseteq Y$ has a non-zero $m_{s}$-measure.
\end{lemma}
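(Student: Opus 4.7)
The plan is to verify the four assertions of the lemma in sequence, relying on the conformal relation
\[
\int_{T^{-1}A}f\,dm_{s}=\int_{A}\overline{F}_{s}f\,dm_{s}
\]
and on Proposition \ref{IKPartition}, which under our hypotheses gives $m_{s}(\{x\})=0$ for every $x\in [0,1]$ (since $m_{s}\neq \delta_{\beta}$). Compactness is immediate: $Y$ is the intersection of closed subsets of $[0,1]$, hence closed and therefore compact, with the order inherited from $\mathbb{R}$. For the positive-measure statement I argue by contradiction: a non-trivial $J\subseteq Y$ with $m_{s}(J)=0$ would make $[0,1]\setminus\operatorname{int}(J)$ a closed set of full $m_{s}$-measure missing $\operatorname{int}(J)\subseteq Y$, violating the minimality of the support.

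For the forward invariance I set $V=[0,1]\setminus Y$; this is open and satisfies $m_{s}(V)=0$. Condition $(C)$ yields $\|\overline{F}_{s}\mathds{1}\|_{\infty}<\infty$, so the conformal relation gives $m_{s}(T^{-1}V)=0$. Since $T$ is continuous on the interior of each branch $I_{k}$, the set $T^{-1}V\cap\operatorname{int}(I_{k})$ is open of zero $m_{s}$-measure, hence contained in $V$ by the defining property of the support. This forces $T(Y\cap\operatorname{int}(I_{k}))\subseteq Y$. At each of the finitely many partition points $a_{0},\dots,a_{N}$, if $a_{k}\in Y$ then $m_{s}(\{a_{k}\})=0$ prevents $a_{k}$ from being isolated, so $Y$ accumulates at $a_{k}$ from at least one side; continuity of $T$ from that side and closedness of $Y$ place $T(a_{k}^{-})\in Y$ or $T(a_{k}^{+})\in Y$, and redefining $T(a_{k})$ to this value provides the finite modification yielding $T(Y)\subseteq Y$.

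For the backward invariance I rely on the branch change-of-variables identity
\[
m_{s}(\psi_{k}(A))=\gamma^{-1}\int_{A}(\psi_{k}')^{s}\,dm_{s},\qquad A\subseteq T_{k}(I_{k}),
\]
obtained by plugging $f=\chi_{I_{k}}$ into the conformal relation and using that the branches $\psi_{j}$ map into disjoint sets modulo $m_{s}$-null boundary. Suppose $x\in Y\cap\operatorname{int}(T_{k}(I_{k}))$ satisfies $\psi_{k}(x)\in V$. Choose an open $U\ni x$ small enough that $U\subseteq T_{k}(I_{k})$ and $\psi_{k}(U)\subseteq V$, which is possible because $\psi_{k}$ is continuous on $\operatorname{int}(T_{k}(I_{k}))$ and $V$ is open. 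Then $m_{s}(\psi_{k}(U))\leq m_{s}(V)=0$, so the identity forces $\int_{U}(\psi_{k}')^{s}\,dm_{s}=0$. The $a$-convex structure, together with the differentiability hypothesis, makes $(\psi_{k}')^{s}>0$ on $T_{k}(I_{k})$ except at an at-most-countable set, which carries zero $m_{s}$-measure by Proposition \ref{IKPartition}. Hence $m_{s}(U)=0$, contradicting $x\in\operatorname{supp}(m_{s})$. Therefore $\psi_{k}(x)\in Y$ for every $x\in Y\cap\operatorname{int}(T_{k}(I_{k}))$, and the only possible exceptions to $T^{-1}\{x\}\subseteq Y$ are the finite set $\bigcup_{k=1}^{N}\{T_{k}(a_{k-1}),T_{k}(a_{k}^{-})\}$ of branch-image endpoints together with the finitely many values $T(a_{k})$ introduced by the modifications.

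The step I expect to require the most care is the change-of-variables identity: its clean form depends on the branches being essentially disjoint up to $m_{s}$-null boundary, and the deduction $m_{s}(U)=0$ requires the almost-everywhere positivity of $(\psi_{k}')^{s}$ to hold in the $m_{s}$-sense, i.e.\ the zero-set of $\psi_{k}'$ must be $m_{s}$-null. Both facts follow from the $a$-convex hypothesis, but the possibility of indifferent fixed points or infinite-slope endpoints must be tracked to ensure the finite exceptional set is identified precisely.
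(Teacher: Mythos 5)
The paper does not actually prove this statement; it is imported verbatim from \cite[Lemma 3]{hk1}, so your argument is a from-scratch reconstruction rather than something to match line by line against an in-text proof. Your treatment of compactness, of the forward invariance $T(Y)\subseteq Y$ (via $m_{s}(T^{-1}V)\leq \gamma^{-1}\|F_{s}\mathds{1}\|_{\infty}\,m_{s}(V)=0$, openness of $T^{-1}V\cap \operatorname{int}(I_{k})$, and the finite modification at the partition points) is correct and is the standard route. One small repair is needed in the interval-positivity step: an order-interval $J=[a,b]\cap Y$ of $Y$ may have empty interior in $[0,1]$ when $Y$ is totally disconnected, so removing $\operatorname{int}(J)$ proves nothing; instead use $m_{s}([0,1]\setminus Y)=0$ to conclude $m_{s}((a,b))=0$ and remove the open interval $(a,b)$, which forces $Y\cap(a,b)=\emptyset$.

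The genuine gap is in the backward invariance, at precisely the step you flagged. To pass from $\int_{U}(\psi_{k}')^{s}\,dm_{s}=0$ to $m_{s}(U)=0$ you need $m_{s}(\{\psi_{k}'=0\})=0$, and you assert that $\{\psi_{k}'=0\}$ is at most countable ``by the $a$-convex structure.'' That does not follow. The hypotheses control only the sums $\sum_{i=1}^{k}\psi_{i}'$ and $\sum_{i=1}^{k}(\psi_{i}')^{s}$, which are non-increasing; for $k\geq 2$ the individual term $\psi_{k}'$ is merely a difference of two non-increasing functions, and its zero set is essentially $T_{k}\bigl(\{T_{k}'=\infty\}\bigr)$. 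The set where $T_{k}$ has infinite derivative is Lebesgue-null in $I_{k}$ because $T_{k}$ is increasing, but the standing assumption is $m\circ T_{k}^{-1}\ll m$, i.e.\ only $\psi_{k}$, not $T_{k}$, is absolutely continuous, so $T_{k}$ can push this null set onto an uncountable set of positive Lebesgue --- and a priori positive $m_{s}$ --- measure. Proposition \ref{IKPartition} only disposes of countable sets, so it cannot close this. As written, your argument yields $\psi_{k}(x)\in Y$ only for those $x$ near which $(\psi_{k}')^{s}$ is not $m_{s}$-a.e.\ zero, and that exceptional set is not visibly finite, which is what the lemma requires. To repair it you must either prove directly that $m_{s}\bigl(\{\psi_{k}'=0\}\cap T_{k}(I_{k})\bigr)=0$ under conditions $(B)$ and $(C)$, or weaken the conclusion to backward invariance along branches of positive weight only and check that this weaker form still suffices for Lemma \ref{ConditionalExp} and the Ionescu-Tulcea and Marinescu step.
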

Hence, up to a change on a finite set (zero measure by Proposition \ref{IKPartition}), $Y$ is a $T$-invariant set. We then focus on $(Y, T)$. Similarly to Definition \ref{BoundedVDefinition}, a complex-valued function $f : Y \to \mathbb{C}$ is said to have \textit{bounded variation} if
   \begin{align*}
       V_{Y}f=\sup\left\{\sum^{k-1}_{i=1}|f(x_{i})-f(x_{i-1})|: \{x_{1},\cdots ,x_{k}\}\subset  Y\right\}<\infty,
   \end{align*} where we abuse notation and use $|\cdot|$ to denote the norm on the complex numbers. Similarly to Section 3, we treat $m_{s}$ as a complex measure satisfying equation \eqref{Msmeasure}, where $F_{s}$ acts on complex-valued continuous functions. Denote by $L^{1}_{m_{s}}(Y, \mathbb{C})$ the set of complex-valued integrable functions over $Y$ with respect to $m_{s}$. We define the \textit{variation} of the equivalence class of $f \in L^{1}_{m_{s}}(Y, \mathbb{C})$ by
   \begin{align*}
      v(f)=\inf\{V_{Y}\widehat{f}: \text{$\widehat{f}$ is a version of $f$ in $L^{1}_{m_{s}}(Y,\mathbb{C})$}\}.
   \end{align*} 
We denote by $BV_{m_{s}}$ the set of functions with bounded variation in $L^{1}_{m_{s}}(Y, \mathbb{C})$. It is known that $BV_{m_{s}}$ is a linear subspace of $L^{1}_{m_{s}}(Y, \mathbb{C})$, but it is not closed with respect to the norm $\|\cdot\|_{L^{1}_{m_{s}}(Y, \mathbb{C})}$. The following defines a norm for $BV_{m_{s}}$ (see \cite[Remark 1]{hk1}):
\begin{align}\label{BV-Norm}
     \|f\|_{v}=\|f\|_{L^{1}_{m_{s}}(Y,\mathbb{C})}+v(f), && \text{for each  $f\in BV_{m_{s}}$.}
\end{align}
\begin{lemma}\label{ConditionalExp}
    Let $f \in L^{1}_{m_{s}}(Y, \mathbb{C})$ be such that $v(f) \leq c$ for some $c > 0$. Let $\{M_{r}\}_{r \in \mathbb{N}}$ be the sequence of positive numbers given by condition $(B)$. Then for every $r \in \mathbb{N}$,
    \begin{align*}
        \int|f-E_{m_{s}}(f|\mathcal{U}_{r})|dm_{s}\leq c\left(M_{r}+m_{s}(I_{(N^{*})_{i=1}^{r}})\right),
    \end{align*} where $E_{m_{s}}(f|\mathcal{U}_{r})$ is the conditional expectation of $f$ with respect to the $\sigma$-algebra $\mathcal{U}_{r}$ generated by $\mathcal{P}^{r}$.
\end{lemma}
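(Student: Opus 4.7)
The plan is to estimate the defect of the conditional expectation cylinder by cylinder in $\mathcal{P}^{r}$, and then separate out the unique cylinder that contains $\beta$, bounding the rest with condition $(B)$. First I fix a representative $\widehat{f}$ of $f$ with $V_{Y}(\widehat{f}) \leq v(f) + \varepsilon \leq c + \varepsilon$, and send $\varepsilon \downarrow 0$ at the end. Set $I^{\ast} := I_{(N^{\ast})_{i=1}^{r}}$. Since $\beta^{-}$ is either an indifferent fixed point of $T_{N^{\ast}}$ by Remark~\ref{SomeIterationExp2} or satisfies $\psi_{N^{\ast}}'(\beta^{-})<1$ (in which case some iterate is expansive and the argument only improves), the forward orbit of $\beta$ stays in $I_{N^{\ast}}$; thus $I^{\ast}$ is the unique cylinder of $\mathcal{P}^{r}$ containing $\beta$, and condition $(B)$ applies to every other $I \in \mathcal{P}^{r}$.

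On each cylinder $I$ with $m_{s}(I)>0$, the conditional expectation $E_{m_{s}}(\widehat{f}\mid\mathcal{U}_{r})$ is the constant $m_{s}(I)^{-1}\int_{I}\widehat{f}\,dm_{s}$, so Fubini together with the elementary inequality $|\widehat{f}(x)-\widehat{f}(y)|\leq V_{I}(\widehat{f})$ for $x,y\in I$ yields
\begin{align*}
    \int_{I}\bigl|\widehat{f}-E_{m_{s}}(\widehat{f}\mid\mathcal{U}_{r})\bigr|\,dm_{s}\leq \frac{1}{m_{s}(I)}\int_{I}\!\!\int_{I}|\widehat{f}(x)-\widehat{f}(y)|\,dm_{s}(y)\,dm_{s}(x)\leq V_{I}(\widehat{f})\,m_{s}(I).
\end{align*}
Summing over $\mathcal{P}^{r}$ and isolating $I^{\ast}$, I reduce to bounding $V_{I^{\ast}}(\widehat{f})\,m_{s}(I^{\ast})+\sum_{I\neq I^{\ast}}V_{I}(\widehat{f})\,m_{s}(I)$. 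For $I\neq I^{\ast}$, iterating equation \eqref{Msmeasure} gives $m_{s}(I)=m_{s}(\overline{F}_{s}^{n}\chi_{I})\leq \|\overline{F}_{s}^{n}\chi_{I}\|_{\infty}$ for every $n$, and then condition $(B)$ produces $m_{s}(I)\leq M_{r}$. Superadditivity of the total variation across the partition into disjoint intervals bounds both $V_{I^{\ast}}(\widehat{f})$ and $\sum_{I\neq I^{\ast}}V_{I}(\widehat{f})$ by $V_{Y}(\widehat{f})\leq c+\varepsilon$, giving
\begin{align*}
    \int\bigl|\widehat{f}-E_{m_{s}}(\widehat{f}\mid\mathcal{U}_{r})\bigr|\,dm_{s}\leq (c+\varepsilon)\bigl(m_{s}(I^{\ast})+M_{r}\bigr),
\end{align*}
and sending $\varepsilon\downarrow 0$ yields the claim.

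The step I expect to require the most care is justifying the superadditivity $\sum_{I\in\mathcal{P}^{r}} V_{I}(\widehat{f}) \leq V_{Y}(\widehat{f})$, since the cylinders are half-open intervals whose closures share endpoints and $\widehat{f}$ may jump across them. The standard remedy is to consider, for each $I$, a finite partition of $I$ with all points strictly inside $I$, and observe that the disjoint union is a partition of $Y$ whose variation sum dominates $\sum_{I} V_{I}(\widehat{f})$ from above while being itself bounded by $V_{Y}(\widehat{f})$. Everything else is a routine application of Jensen/Fubini plus condition $(B)$, which is precisely tailored to produce the pointwise bound $m_{s}(I)\leq M_{r}$ via equation \eqref{Msmeasure}.
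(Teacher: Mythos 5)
Your proof is correct and takes essentially the same route as the paper's: decompose the integral over the cylinders of $\mathcal{P}^{r}$, bound each term by $m_{s}(I)\,V_{I}(\widehat{f})$, apply condition $(B)$ (via equation \eqref{Msmeasure}) to every cylinder other than $I_{(N^{*})_{i=1}^{r}}$, and control the sum of variations by $V_{Y}(\widehat{f})$. Your explicit $\varepsilon$-approximation of $v(f)$ and the remark on superadditivity of the variation across half-open cylinders merely make rigorous what the paper compresses into ``taking a suitable version of $f$.''
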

\begin{proof}
    By Lemma \ref{SupportMS}, every non-trivial interval in $Y$ has non-zero measure. Thus
    \begin{align*}
        \int |f-E_{m_{s}}(f|\mathcal{U}_{r})|dm_{s} & = \int \left|f-\sum_{I_{(d_{i})_{i=1}^{r}}}\dfrac{ \chi_{I_{(d_{i})_{i=1}^{r}}}}{m_{s}(I_{(d_{i})_{i=1}^{r}})}\int_{I_{(d_{i})_{i=1}^{r}}}fdm_{s}\right|dm_{s}\\
        & \leq \sum_{I_{(d_{i})_{i=1}^{r}}}\int_{I_{(d_{i})_{i=1}^{r}}}\left|f-\dfrac{1}{m_{s}(I_{(d_{i})_{i=1}^{r}})}\int_{I_{(d_{i})_{i=1}^{r}}}fdm_{s}\right|dm_{s},
    \end{align*} Since $\frac{1}{m_{s}(I_{(d_{i})_{i=1}^{r}})} \int_{I_{(d_{i})_{i=1}^{r}}} f \, dm_{s}$ is the expected value of $f$ on the cylinder $I_{(d_{i})_{i=1}^{r}}$,
    \begin{align*}
    \int \left|f-\dfrac{1}{m_{s}(I_{(d_{i})_{i=1}^{r}})}\int_{I_{(d_{i})_{i=1}^{r}}}fdm_{s}\right|dm_{s}& \leq m_{s}(I_{(d_{i})_{i=1}^{r}})V_{I_{(d_{i})_{i=1}^{r}}}(f).
    \end{align*} By condition $(B)$, we obtain
    \begin{align*}
        \int |f-E_{m_{s}}(f|\mathcal{U}_{r})|dm_{s}
        &\leq  M_{r} \sum_{I_{(d_{i})_{i=1}^{k}}\neq I_{(N^{*})_{i=1}^{r}}}V_{I_{(d_{i})_{i=1}^{r}}}(f)+m_{s}(I_{(N^{*})_{i=1}^{r}})V_{I_{(N^{*})_{i=1}^{r}}}\\
        &\leq  M_{r}V_{[0,1]}(f) +m_{s}(I_{(N^{*})_{i=1}^{r}})V_{I_{(N^{*})_{i=1}^{r}}}.
    \end{align*}
    The result follows by taking a suitable version of $f$ and using that $v(f) \leq c$.
\end{proof}

\begin{proposition}\label{PropoBVBanachSpace} If $m_{s}\neq \delta_{\beta}$, then 
    \begin{enumerate}
        \item $(BV_{m_{s}},\|\cdot\|_{v})$ is a Banach space.
        \item $BV_{m_{s}}$ is dense in $L^{1}_{m_{s}}(Y,\mathbb{C})$.
    \end{enumerate}
\end{proposition}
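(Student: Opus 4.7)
The plan is to prove the three claims in order, with (1) being the heart of the argument; (2) and (3) will follow by standard functional analytic manipulation. Throughout I assume $m_{s}\neq \delta_{\beta}$, so by the discussion following Proposition \ref{IKPartition} we have $m_{s}(\{x\})=0$ for every $x\in\overline{[0,1]}$, and every nontrivial interval in $Y$ has positive $m_{s}$-measure by Lemma \ref{SupportMS}.

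For (1), I would start with a sequence $\{f_{n}\}\subset E_{c}$. By the analog of Proposition \ref{boundedIneq} on the compact ordered space $Y$ (applied to a good representative), each $f_{n}$ is uniformly bounded in modulus by $\|f_{n}\|_{v}\leq c$. For each fixed $r\in\N$, the conditional expectation $E_{m_{s}}(f_{n}|\mathcal{U}_{r})$ is a step function, constant on each atom of $\mathcal{P}^{r}$, and hence lives in a finite-dimensional subspace of $L^{1}_{m_{s}}(Y,\C)$. Uniform boundedness by $c$ puts the whole family in a precompact set of this finite-dimensional space. A standard Cantor diagonal argument then produces a subsequence (still denoted $\{f_{n}\}$) such that $\{E_{m_{s}}(f_{n}|\mathcal{U}_{r})\}_{n}$ is Cauchy in $L^{1}_{m_{s}}$ for every $r\in\N$.

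To upgrade Cauchy-ness of the conditional expectations to Cauchy-ness of $\{f_{n}\}$ itself, I would use the triangle inequality
\begin{align*}
\|f_{n}-f_{m}\|_{L^{1}_{m_{s}}} \leq \|f_{n}-E_{m_{s}}(f_{n}|\mathcal{U}_{r})\|_{L^{1}_{m_{s}}} + \|E_{m_{s}}(f_{n}|\mathcal{U}_{r})-E_{m_{s}}(f_{m}|\mathcal{U}_{r})\|_{L^{1}_{m_{s}}} + \|E_{m_{s}}(f_{m}|\mathcal{U}_{r})-f_{m}\|_{L^{1}_{m_{s}}}.
\end{align*}
By Lemma \ref{ConditionalExp}, the outer two terms are at most $c(M_{r}+m_{s}(I_{(N^{*})_{i=1}^{r}}))$. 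Condition $(B)$ gives $M_{r}\to 0$; for the remaining term I would observe that $\bigcap_{r}I_{(N^{*})_{i=1}^{r}}$ consists of the points that stay forever in $[a_{N^{*}-1},\beta)$, which, by the forward invariance of $[0,\beta]$ and the argument underlying Proposition \ref{IKPartition} (the sequence $\{w_{n}\}$ exhausts $[0,\beta)$), reduces to the single point $\beta^{-}$. Since $m_{s}\neq \delta_{\beta}$ implies $m_{s}(\{\beta^{-}\})=0$, we get $m_{s}(I_{(N^{*})_{i=1}^{r}})\to 0$ by continuity of measure. Fixing $r$ so large that both outer terms are $<\varepsilon/3$ uniformly in $n,m$ and then taking $n,m$ large makes the middle term $<\varepsilon/3$; thus $\{f_{n}\}$ is Cauchy in $L^{1}_{m_{s}}(Y,\C)$ and converges to some $f$. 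Lower semicontinuity of variation (Fatou applied to suprema over finite partitions) gives $v(f)\leq \liminf_{n} v(f_{n})\leq c$, and $\|f\|_{L^{1}_{m_{s}}}\leq c$ follows from $L^{1}$-convergence; hence $f\in E_{c}$, proving compactness.

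For (2), I would take a Cauchy sequence $\{f_{n}\}$ in $(BV_{m_{s}},\|\cdot\|_{v})$. Its $\|\cdot\|_{v}$-norms are bounded by some $C$, so $\{f_{n}\}\subset E_{C}$; by (1) it has an $L^{1}_{m_{s}}$-limit $f\in E_{C}\subset BV_{m_{s}}$. To upgrade $L^{1}$-convergence to convergence in $\|\cdot\|_{v}$, fix $\varepsilon>0$ and $m$ with $\|f_{n}-f_{m}\|_{v}<\varepsilon$ for all $n\geq m$; applying lower semicontinuity of $v$ to $\{f_{n}-f_{m}\}_{n\geq m}$ yields $v(f-f_{m})\leq \varepsilon$, hence $\|f-f_{m}\|_{v}\leq 2\varepsilon$, completing the Banach space claim. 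For (3), finite linear combinations of characteristic functions of intervals in $Y$ lie in $BV_{m_{s}}$ and are dense in $L^{1}_{m_{s}}(Y,\C)$ by standard measure theory, giving the density statement.

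The delicate step is the control of $m_{s}(I_{(N^{*})_{i=1}^{r}})$: condition $(B)$ only bounds cylinders away from $\beta$, so the indifferent-fixed-point cylinder must be handled separately, via the hypothesis $m_{s}\neq\delta_{\beta}$ and the atomlessness that this forces through Proposition \ref{IKPartition}. Everything else is a fairly standard diagonal-extraction plus lower-semicontinuity argument in the style of Hofbauer–Keller.
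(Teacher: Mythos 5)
Your proof is correct and follows essentially the same route the paper intends: the paper only states that Lemma \ref{SupportMS}, the norm \eqref{BV-Norm}, and Lemma \ref{ConditionalExp} yield the result ``using similar arguments as in \cite[Lemma 5]{hk1}'', and your argument is precisely that Hofbauer--Keller scheme (finite-dimensional conditional expectations, diagonal extraction, the approximation bound from Lemma \ref{ConditionalExp}, and lower semicontinuity of the variation). Your separate treatment of the cylinder $I_{(N^{*})_{i=1}^{r}}$ via $\bigcap_{r}I_{(N^{*})_{i=1}^{r}}=\{\beta^{-}\}$ and the non-atomicity forced by $m_{s}\neq\delta_{\beta}$ is exactly the point the paper's hypotheses are designed to handle, so no gap remains.
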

 Lemma \ref{SupportMS}, equation \eqref{BV-Norm}, and Lemma \ref{ConditionalExp} imply Proposition \ref{PropoBVBanachSpace}, using similar arguments as in \cite[Lemma 5]{hk1}. Denote $L^{1}_{m_{s}}(Y)$ as the set of real-valued integrable functions over $Y$.
 \begin{proposition}\label{HypITMT}
     Let $T$ be an $a$-convex map satisfying the hypotheses of Theorem \ref{PrincipalThmChap52}. Then
    \begin{enumerate}
        \item If there is a sequence $(f_{n})\subset BV_{m_{s}}$, $f\in L^{1}_{m_{s}}(Y, \mathbb{C})$ such that 
        \begin{enumerate}
            \item $\lim_{n\rightarrow\infty}\|f_{n}-f\|_{L^{1}_{m_{s}}}=0$,
            \item  for all $n\in\N$ we have $\|f_{n}\|_{BV_{m_{s}}}\leq C$,
        \end{enumerate} then $f\in BV_{m_{s}}$ and $\|f\|_{BV_{m_{s}}}\leq C$.
        \item Iterates of $\overline{F}_{s}$ are bounded, that is, $H=\sup_{n\geq 0}\|\overline{F}_{s}^{n}\|_{BV_{m_{s}}}<\infty$.
        \item (Lasota--Yorke type inequality) There exist $k\geq 1$, $0<r<1$, and $0<R<\infty$ such that for $f\in L^{1}_{m_{s}}(Y, \mathbb{C})$,
        \begin{align*}
            \|\overline{F}_{s}^{k}f\|_{BV_{m_{s}}}\leq r\|f\|_{BV_{m_{s}}}+R\|f\|_{L^{1}_{m_{s}}}.
        \end{align*}
            \item If $B$ is a bounded subset of $(BV_{m_{s}},\|\cdot\|_{v})$, then the closure of $\overline{F}_{s}^{k}B$ is compact in $(L^{1}_{m_{s}}(Y, \mathbb{C}),\|\cdot\|_{L^{1}_{m_{s}}})$.
    \end{enumerate}
 \end{proposition}
 \begin{proof}
    Suppose there is a sequence $\{f_{n}\}\subset BV_{m_{s}}$, a function $f\in L^{1}_{m_{s}}(Y, \mathbb{C})$ such that $\lim_{n\rightarrow\infty}\|f_{n}-f\|_{L^{1}_{m_{s}}}=0$, and, for all $n\in\N$ we have $\|f_{n}\|_{v}\leq C$. Let $\varepsilon>0$ be fixed. By the convergence assumption there exists $N\in \N$ such that for every $n\geq N$
    \begin{align*}
        \|f\|_{L^{1}_{m_{s}}}-\varepsilon\leq \|f_{n}\|_{L^{1}_{m_{s}}} \leq \|f\|_{L^{1}_{m_{s}}}+\varepsilon .
\end{align*}
 Thus,
    \begin{align*}       v(f_{n})= \|f_{n}\|_{v}-\|f_{n}\|_{L^{1}_{m_{s}}} \leq C-(\|f\|_{L^{1}_{m_{s}}}-\varepsilon).
    \end{align*} By Proposition \ref{LasotaYorkeInequality} we obtain
    \begin{align*}
        \|f_{n}\|_{\infty}\leq v(f_{n})+\|f_{n}\|_{L^{1}_{m_{s}}}\leq C-\|f\|_{L^{1}_{m_{s}}}+\varepsilon +\|f_{1}\|_{L^{1}_{m_{s}}}+\varepsilon\leq C+2\varepsilon.
    \end{align*}Thus, we can apply Helly's First Theorem to obtain a sub-sequence $\{ f_{n_{k}}\}$ and a function $f^{*}$ such that $f_{n_{k}}\rightarrow f^{*}$ pointwisely $m_{s}$-a.e.p. But $f_{n_{k}}\rightarrow f$ in $L^{1}_{m_{s}}(Y)$. So, $f=f^{*}$ for $m_{s}$-a.e.p. For any partition $0=t_{0}<t_{1}<\cdots <t_{m-1}<t_{m}=1$, we have
    \begin{align*}
        \sum^{m}_{i=1}|f^{*}(t_{i})-f^{*}(t_{i-1})|&=\lim_{k\rightarrow \infty}\sum^{m}_{i=1}|f_{n_{k}}(t_{i})-f_{n_{k}}(t_{i-1})|\\
        &\leq \lim_{k\rightarrow \infty}V_{I}f_{n_{k}}\\
        &\leq C-\|f\|_{L^{1}_{m_{s}}}+\varepsilon.
    \end{align*}Since this holds for any version of $f^{*}$ in $L^{1}_{m_{s}}(Y, \mathbb{C})$ we get $v(f^{*})\leq C-\|f\|_{L^{1}_{m_{s}}}+\varepsilon$ and
    \begin{align*}
        \|f^{*}\|_{v}\leq C-\|f\|_{L^{1}_{m_{s}}}+\varepsilon +\|f\|_{L^{1}_{m_{s}}}=C+\varepsilon.
    \end{align*} Since $\varepsilon>0$ is arbitrary, $(a)$ is proved. 

    Recall $\|\overline{F}_{s}f\|_{L^{1}_{m_{s}}} =\|f\|_{L^{1}_{m_{s}}}$ and suppose $\|f\|_{v}=\|f\|_{L^{1}_{m_{s}}}+v(f)=1$. Given any partition $\{b_{i}\}^{r}_{i=0}$ of $Y$ we have
    \begin{align*}
        \sum^{r}_{i=1}|\overline{F}_{s}f(b_{i})-\overline{F}_{s}f(b_{i-1})|=\sum^{r}_{i=1}\left|\sum_{j=1}^{N}(\psi_{j}')^{s}(b_{i})f\circ \psi_{j}(b_{i})-(\psi_{j}')^{s}(b_{i-1})f\circ \psi_{j}(b_{i-1})\right|.
    \end{align*}If we add and subtract $(\psi_{j}')^{s}(b_{i})f\circ \psi_{j}(b_{i-1})$ in each summand on the right-hand side,
    \begin{align}\label{Ecu547}
        \sum^{r}_{i=1}|\overline{F}_{s}f(b_{i})-\overline{F}_{s}f(b_{i-1})|&\leq \sum^{r}_{i=1}\sum_{j=1}^{N}(\psi_{j}')^{s}(b_{i})|f\circ \psi_{j}(b_{i})-f\circ \psi_{j}(b_{i-1})|\\
        & + \sum^{r}_{i=1}\sum_{j=1}^{N}f\circ \psi_{j}(b_{i-1})((\psi_{j}')^{s}(b_{i})-(\psi_{j}')^{s}(b_{i-1})).
    \end{align}On the one hand, since $\{\psi_{j}(b_{i}),\psi_{j}(b_{i-1})\}_{j,i}$ defines another partition on $Y$, the first term of the right-hand side of inequality \ref{Ecu547} becomes
    \begin{align}\label{Ecu549}
        \sum^{r}_{i=1}\sum_{j=1}^{N}\|\psi_{j}'\|_{\infty,m_{s}}^{s}|f\circ \psi_{j}(b_{i})-f\circ \psi_{j}(b_{i-1})|
        \leq \|T'\|_{\infty,m_{s}}^{-s}v(f),
    \end{align}where $\|\cdot \|_{\infty,m_{s}}$ denotes the essential supremum norm with respect to $m_{s}$. On the other hand, the second term of the right-hand side of inequality \ref{Ecu547} becomes
    \begin{align*}
        \|f\|_{\infty}\sum^{r}_{i=1}\sum_{j=1}^{N}((\psi_{j}')^{s}(b_{i})-(\psi_{j}')^{s}(b_{i-1}))\leq \|f\|_{\infty}v(\overline{F}_{s}1).
    \end{align*}Since $\|f\|_{v}\leq 1$, inequalities \eqref{Ecu547} and \eqref{Ecu549}, and by Proposition \ref{LasotaYorkeInequality} there exists $0<b<\infty$ such that 
    \begin{align*}
        \sum^{r}_{i=1}|\overline{F}_{s}f(b_{i})-\overline{F}_{s}f(b_{i-1})|\leq \|T'\|_{\infty,m_{s}}^{-s}+b 
    \end{align*}Since $\infty>\sum^{N}_{j=1}(\psi_{j}'(0))^{s}\geq \sup_{i}(\psi_{j}')^{s}$, we obtain $\|T'\|_{\infty,m_{s}}^{-s}<\infty$. Thus, for every $n\in \N$ $$\|\overline{F}_{s}^{n}f\|_{v}\leq \|T'\|_{\infty,m_{s}}^{-s}+b,$$ which proves $(b)$. The Lasota-Yorke inequality is proven as a consequence of Proposition \ref{LasotaYorkeInequality} for $|T'(0)|^{-1}<r <1$. 

    Finally, let $B$ be a bounded subset of $(BV_{m_{s}},\|\cdot \|_{v})$, say $\|f\|_{v}\leq K$ for every $f\in B$ and some $K>0$. Using arguments similar to those in the proof of Proposition \ref{PropoBVBanachSpace}, we show that $\overline{B}$ is compact in $L^{1}_{m_{s}}(Y,\mathbb{C})$ (if $f_{n}\to f$ in $L^{1}_{m_{s}}(Y,\mathbb{C})$, then $\|f\|_{v}\leq K$). From item $(c)$, $\overline{\overline{F}_{s}(B)}=\overline{F}_{s}(\overline{B})$ is also compact in $L^{1}_{m_{s}}(Y)$, since it is bounded and Helly's First Theorem applies.
\end{proof}
  Let $E $ be the set of all $\overline{F}_{s}$-invariant non-negative functions in $L^{1}_{m_{s}}(Y)$ which is non-empty by Theorem \ref{ACIM}. Let $E_{1} = E \cap \left\{ f : \|f\|_{L^{1}_{m_{s}}} = 1 \right\}$ be the set of $\overline{F}_{s}$-invariant densities of $m_{s}$. Proposition \ref{HypITMT} says the operator $\overline{F}_{s}:BV_{m_{s}} \rightarrow BV_{m_{s}}$ satisfies all the hypotheses of the Ionescu-Tulcea and Marinescu Theorem \cite{tm} (see \cite[Section 7.1]{BG}) for the Banach spaces $BV_{m_{s}}$ and $L^{1}_{m_{s}}(Y, \mathbb{C})$. Thus, $E \subset BV_{m_{s}}$ is a convex set and is finite-dimensional. Furthermore, $E_{1}$ has a finite number of extreme points with disjoint supports spanning $E$ (see \cite[Proposition 7.2.3]{bg}). We obtain Theorem \ref{SpectralDescomp} following \cite[Chapter 7]{bg}.
\begin{theorem}\label{SpectralDescomp}
    Let $T$ be an $a$-convex map satisfying the hypotheses of Theorem \ref{PrincipalThmChap52}. Then $T$ has finitely many $T$-invariant ergodic measures $\{\mu_{1},...,\mu_{n}\}$ absolutely continuous with respect to $m_{s}$. Furthermore, each $\frac{d\mu_{i}}{dm_{s}}\in E_{1}$, and the set $\{\frac{d\mu_{1}}{dm_{s}},\cdots,\frac{d\mu_{n}}{dm_{s}}\}$ spans the space of $\overline{F}_{s}$-invariant densities of bounded variation.
\end{theorem}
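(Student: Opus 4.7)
The plan is to verify the hypotheses of the Ionescu-Tulcea-Marinescu (ITM) theorem for $\overline{F}_{s}$ acting on the pair $(BV_{m_{s}}, \|\cdot\|_{v})$ and $(L^{1}_{m_{s}}(Y,\mathbb{C}), \|\cdot\|_{L^{1}_{m_{s}}})$, deduce quasi-compactness, and then extract the ergodic decomposition from the structure of the fixed-point eigenspace.

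I would first verify the three ITM hypotheses. Completeness of $(BV_{m_{s}}, \|\cdot\|_{v})$ and compactness of its norm-balls in $L^{1}_{m_{s}}(Y,\mathbb{C})$ are Proposition \ref{PropoBVBanachSpace}(1)--(2); that $\overline{F}_{s}$ is a contraction on $L^{1}_{m_{s}}(Y,\mathbb{C})$ follows from Proposition \ref{PmarkovOp} applied componentwise to real and imaginary parts. The Lasota-Yorke inequality of Proposition \ref{VarInequality} is stated only on the cone $\mathcal{J}$, so the key step will be to lift it to all of $BV_{m_{s}}$. Given $f \in BV_{m_{s}}$, the Jordan decomposition provides a representation $f = f(1)\mathds{1} + h_{-} - h_{+}$ with $h_{\pm} \in \mathcal{J}$, $V_{[0,1]} h_{-} + V_{[0,1]} h_{+} = V_{[0,1]} f$, $\|h_{\pm}\|_{L^{1}_{m_{s}}} \leq V_{[0,1]} f$, and $|f(1)| \leq V_{[0,1]} f + \|f\|_{L^{1}_{m_{s}}}$ by Proposition \ref{boundedIneq}. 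Since $\mathcal{J}$ is preserved by $\overline{F}_{s}$, iterating Proposition \ref{VarInequality} on $\mathds{1}$, $h_{-}$, $h_{+}$ and combining linearly would yield, for all $n$ sufficiently large,
\begin{align*}
V_{[0,1]} \overline{F}_{s}^{n} f \leq \theta\, V_{[0,1]} f + K \|f\|_{L^{1}_{m_{s}}}
\end{align*}
with $\theta < 1$ and some constant $K > 0$.

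With these ingredients in place, the ITM theorem (\cite{tm}, cf. \cite[Section 7.1]{BG}) would give quasi-compactness of $\overline{F}_{s}$ on $BV_{m_{s}}$: the peripheral spectrum is a finite union of eigenvalues of modulus one, each with finite-dimensional generalized eigenspace, while the rest of the spectrum lies inside a disk of radius strictly less than one. In particular, the space $E \subset BV_{m_{s}}$ of fixed points is finite-dimensional. Since $\overline{F}_{s}$ is a positive Markov operator, the convex set $E_{1}$ of $\overline{F}_{s}$-invariant probability densities would, by \cite[Proposition 7.2.3]{bg}, possess finitely many extreme points $g_{1}, \ldots, g_{n}$ whose supports are pairwise disjoint modulo $m_{s}$ and which span $E$.

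To identify these extreme densities with ergodic absolutely continuous invariant measures, set $\mu_{i} = g_{i} m_{s}$; equation \eqref{Msmeasure} together with $\overline{F}_{s} g_{i} = g_{i}$ gives $T$-invariance of $\mu_{i}$, and ergodicity follows because any non-trivial $T$-invariant set of intermediate $\mu_{i}$-measure would decompose $g_{i}$ non-trivially into two $\overline{F}_{s}$-invariant densities in $E_{1}$, contradicting extremality. Conversely, any $T$-invariant ergodic probability measure absolutely continuous with respect to $m_{s}$ has density in $E_{1}$, which must again be an extreme point and therefore equal to some $g_{i}$. The main obstacle I anticipate is the careful tracking of constants in the Jordan-decomposition argument to ensure that $\theta < 1$ can be achieved after a finite number of iterations; the remaining steps are a direct packaging of the cited spectral and convexity results.
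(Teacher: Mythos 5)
Your overall route coincides with the paper's: both verify the Ionescu--Tulcea--Marinescu hypotheses for $\overline{F}_{s}$ on the pair $(BV_{m_{s}},L^{1}_{m_{s}}(Y,\mathbb{C}))$, use Proposition \ref{PropoBVBanachSpace} for completeness and compactness, and then pass to the extreme points of $E_{1}$ via \cite[Proposition 7.2.3]{bg}; your identification of the extreme densities with the ergodic absolutely continuous measures is also the intended endgame. The paper compresses the verification of the Lasota--Yorke hypothesis into ``it is standard to verify,'' and this is exactly where your proposal has a genuine gap.

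The Jordan-decomposition lift of inequality \eqref{LasotaYorkeInequality} from the cone $\mathcal{J}$ to all of $BV_{m_{s}}$ does not produce a coefficient $\theta<1$. Writing $f=f(1)\mathds{1}+h_{-}-h_{+}$ with $h_{\pm}\in\mathcal{J}$ and iterating Proposition \ref{VarInequality} gives
\begin{align*}
V_{[0,1]}\overline{F}_{s}^{\,n}h_{\pm}\leq \alpha^{n}V_{[0,1]}h_{\pm}+\frac{b}{1-\alpha}\|h_{\pm}\|_{L^{1}_{m_{s}}},
\end{align*}
and the second term is controlled only by $V_{[0,1]}f$ (your own estimate $\|h_{\pm}\|_{L^{1}_{m_{s}}}\leq V_{[0,1]}f$), not by $\|f\|_{L^{1}_{m_{s}}}$: cancellation between $h_{-}$ and $h_{+}$ allows $\|f\|_{L^{1}_{m_{s}}}$ to be arbitrarily small while both $\|h_{\pm}\|_{L^{1}_{m_{s}}}$ remain of order $V_{[0,1]}f$. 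Adding the three contributions (including $|f(1)|\leq V_{[0,1]}f+\|f\|_{L^{1}_{m_{s}}}$ from Proposition \ref{boundedIneq}) yields
\begin{align*}
V_{[0,1]}\overline{F}_{s}^{\,n}f\leq \Bigl(\alpha^{n}+\tfrac{3b}{1-\alpha}\Bigr)V_{[0,1]}f+\tfrac{b}{1-\alpha}\|f\|_{L^{1}_{m_{s}}},
\end{align*}
whose leading coefficient does not fall below $1$ as $n\to\infty$ unless $b(\alpha)$ happens to satisfy $3b<(1-\alpha)(1-\alpha^{n})$, a smallness condition Proposition \ref{VarInequality} does not supply. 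No amount of ``careful tracking of constants'' repairs this, because the obstruction is structural: the Jordan components are measured by the strong norm, not the weak one. The Doeblin--Fortet inequality on all of $BV_{m_{s}}$ requires a direct variation estimate for $\overline{F}_{s}^{n}$ on arbitrary bounded-variation functions (in the spirit of Hofbauer--Keller, with condition $(B)$ controlling the cylinders accumulating at the indifferent point), not a reduction to the cone $\mathcal{J}$.
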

Indeed, it is possible to have more than one ACIM for $m_{s}$ (see \cite[Example 6.1]{bmss} when $m_{1}$ is the Lebesgue measure). 
\section{Equilibrium states}
In this section we prove that the $T$-invariant measure $\mu_{s}$ from Theorem \ref{ACIM} is an equilibrium state for the potential $-s\log |T'|$ on $Y$, i.e.,
\begin{align}\label{PressureVariationalPrinciple}
    h(\mu_{s})-\int s\log |T'|d\mu_{s}=\sup\left\{h(\nu)-\int s\log |T'|d\nu:\text{$\nu$ is $T$-invariant on $Y$}\right\},
\end{align}
where $Y\subseteq[0,1]$ is the support of the conformal measure $m_{s}$ from Section 4. The value on the right-hand side of equation \eqref{PressureVariationalPrinciple} is the topological pressure of $T$ for the potential $-s\log |T'|$ over $Y$, denoted by $P(s)$.
\begin{proposition}\cite[Theorem 8.1.2]{BG}\label{Thm8.1.2}
     Suppose $m_{s} \neq \delta_{\beta}$. If $f$ is an $\overline{F}_{s}$-invariant density of bounded variation in $L^{1}_{m_{s}}(Y)$, then the support of $f$ is open for $m_{s}$-almost every point.
\end{proposition}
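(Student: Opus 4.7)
The plan is to establish that, up to an $m_{s}$-null set, the positivity set of $f$ coincides with an open subset of $Y$; equivalently, $m_{s}$ gives zero measure to the topological boundary of $\{f>0\}$ relative to $Y$. Two ingredients already available in the paper suffice: (i) non-atomicity of $m_{s}$ on $Y$, granted by Proposition \ref{IKPartition} under the assumption $m_{s}\neq \delta_{\beta}$; and (ii) the classical fact that every real bounded variation function has at most countably many points of discontinuity.

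First I would invoke Proposition \ref{IKPartition} together with the discussion following its proof: under condition $(B)$, either $m_{s}\in\{\delta_{\beta^{+}},\delta_{\beta^{-}}\}$, or the partition $\mathcal{P}$ is generating and $m_{s}(\{x\})=0$ for every $x\in\overline{[0,1]}$. Excluding the atomic alternative by hypothesis, $m_{s}$ is non-atomic on $Y$, so every countable subset of $Y$ has zero $m_{s}$-measure.

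Next I would fix a pointwise representative of $f$, say the right-continuous version $\widehat{f}(x):=\lim_{y\to x^{+}}f(y)$, which is well defined because BV functions possess one-sided limits at every point and differ from any other representative only on a countable (hence $m_{s}$-null) set. Let $D\subset Y$ denote the at most countable set of points at which $\widehat{f}$ is discontinuous; by the previous step $m_{s}(D)=0$. Set $U:=\operatorname{int}\{\widehat{f}>0\}\cap Y$, open in the relative topology of $Y$. For any $x\in Y\setminus D$ with $\widehat{f}(x)>0$, continuity of $\widehat{f}$ at $x$ supplies a neighborhood in $Y$ on which $\widehat{f}>0$, so $x\in U$. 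Hence $\{\widehat{f}>0\}\setminus U\subseteq D$, whose $m_{s}$-measure vanishes; this is the claimed conclusion.

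The delicate step is (i), the non-atomicity of $m_{s}$ on $Y$. This is precisely what Proposition \ref{IKPartition} and the subsequent remark supply once $m_{s}\neq\delta_{\beta}$ is imposed under the standing hypotheses of Theorem \ref{PrincipalThmChap52}. Lemma \ref{SupportMS} plays a supporting role by ensuring that non-trivial subintervals of $Y$ carry positive $m_{s}$-measure, so that the topological interior and the $m_{s}$-essential interior agree modulo null sets and the argument genuinely describes the support of $f$ rather than an auxiliary set. Neither the $\overline{F}_{s}$-invariance of $f$ nor the full strength of the BV norm intervenes directly; the proof rests only on the existence of one-sided limits for BV functions and on the non-atomicity of $m_{s}$.
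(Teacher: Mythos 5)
Your argument does establish, correctly and economically, the following elementary fact: for any non-negative BV function $f$ on $Y$, a one-sided-limit representative has an at most countable discontinuity set $D$, so $\{f>0\}\setminus \mathrm{int}\{f>0\}\subseteq D$, and by non-atomicity of $m_{s}$ (Proposition \ref{IKPartition}) the positivity set of $f$ agrees with an open set up to an $m_{s}$-null set. But this is weaker than what the cited result \cite[Theorem 8.1.2]{bg} asserts and than what the paper actually uses in Proposition \ref{PositiveDensity}. There ``the support of $f$'' is the topological support of the measure $f\,dm_{s}$ --- a closed set --- and the content of the proposition is that $f>0$ holds $m_{s}$-almost everywhere on that closed set; equivalently, the closed support and the open positivity set coincide modulo $m_{s}$-null sets. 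Your proof does not rule out the scenario in which $\{f>0\}$ is a countable union of small intervals accumulating on a nowhere dense subset of $Y$ of positive $m_{s}$-measure on which $f$ vanishes: there $\{f>0\}$ is genuinely open, yet $\supp(f\,dm_{s})$ exceeds it by a set of positive measure. Such configurations do occur among non-negative BV functions (take $\sum_{n}a_{n}\chi_{I_{n}}$ with $\sum_{n}a_{n}<\infty$ and $I_{n}$ the complementary intervals of a fat Cantor set), so nothing in your argument excludes them; and if they occurred here, the disjoint-support step of Proposition \ref{PositiveDensity}, which passes from $(0,\beta)\subseteq\supp(d\mu_{q}/dm_{s})$ to $g_{\beta}=d\mu_{q}/dm_{s}$, would fail.

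The missing ingredient is precisely the $\overline{F}_{s}$-invariance of $f$, which you explicitly declare unnecessary --- that remark is the symptom of the gap. From $\overline{F}_{s}f=f$ and positivity of the weights $(\psi_{i}')^{s}$ one gets $T(\{f>0\})\subseteq\{f>0\}$ modulo $m_{s}$-null sets; combining this with the expansion of $T$ on $[0,a_{1}]$ (Remark \ref{SomeIterationExpansive}) and the fact that every non-trivial interval of $Y$ has positive $m_{s}$-measure (Lemma \ref{SupportMS}), any interval on which $f$ is positive is expanded under iteration until it covers whole branch intervals, which forces $\{f>0\}$ to be, modulo a null set, a finite union of intervals. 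Only then does taking the closure add at most finitely many points, so that the closed support is open $m_{s}$-almost everywhere in the strong sense required later. This dynamical step is the heart of the cited theorem and is absent from your proposal; your non-atomicity and countable-discontinuity observations are correct but play only the role of preliminaries.
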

Let $g$ be the $\overline{F}_{s}$-invariant density from Theorem \ref{ACIM}. Recall that $g_{\beta} = (g / A) \chi_{[0, \beta]}$ is $\overline{F}_{s}$-invariant, where $A = \int_0^{\beta} g \, dm_{s}$ by Lemma \ref{Lemma4.4}.
\begin{proposition}\label{PositiveDensity}
    Suppose $m_{s} \neq \delta_{\beta}$ and $\psi_{j}'(\beta)>0$ for some $1\leq j < N$. Then $g_{\beta}$ is the only $\overline{F}_{s}$-invariant density whose support is $[0, \beta]$ and $g_{\beta}|_{[0,\beta]} \geq c$ for some $c > 0$.
\end{proposition}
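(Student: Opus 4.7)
The plan is to prove the two parts of the proposition—the lower bound $g_\beta \ge c > 0$ on $[0,\beta]$ and the uniqueness among $\overline{F}_s$-invariant densities supported on $[0,\beta]$—by separate arguments, exploiting that $g \in \mathcal{J}$ by Theorem \ref{ACIM}, so $g_\beta$ is non-increasing on $[0,\beta]$ and identically zero on $(\beta,1]$; consequently its infimum on $[0,\beta]$ equals $g_\beta(\beta^-)$. Throughout I use the transfer-operator identity $g_\beta(x) = \gamma^{-1}\sum_{i=1}^{N}(\psi_i'(x))^{s}\,g_\beta(\psi_i(x))$.

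For the lower bound, I first show $\supp(g_\beta) = [0,\beta]$. Suppose otherwise, so that $b := \sup\{x \in [0,\beta] : g_\beta(x) > 0\} < \beta$; then the identity at every $x \in (b,\beta)$ forces each summand to vanish, hence $\psi_i(x) \ge b$ whenever $\psi_i'(x) > 0$. By contraposition, for $y < b$ the image $T(y)$ avoids $(b,\beta)$, and combined with forward invariance of $[0,\beta]$ from Remark \ref{SomeIterationExpansive} one obtains $T[0,b) \subseteq [0,b]$. Because $\psi_1' \in \mathcal{J}$ with $\psi_1'(0) = 1/T_1'(0) < 1$ gives $T_1' \ge T_1'(0) > 1$ on $[0,a_1]$, the mean-value theorem yields $T_1(b) > b$, so iterating $T$ on $[0,a_1]$ together with $\bigcup_{k \ge 0}T^{k}[0,a_1] = [0,\beta]$ contradicts $b < \beta$. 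Next, taking the left limit at $\beta$ in the invariance identity, if $g_\beta(\beta^-) = 0$ then every summand vanishes; but the hypothesis supplies $j < N$ with $\psi_j'(\beta) > 0$, and a short check—immediate for $j < N^{*}$, and for $j = N^{*}$ invoking the alternative $\psi_{N^{*}}'(\beta^-) < 1$ of Theorem \ref{PrincipalThmChap52}—shows $\psi_j(\beta^-) < \beta$, so $g_\beta(\psi_j(\beta^-)) > 0$ by the support step, making the $j$-th summand strictly positive, a contradiction. Thus $g_\beta(\beta^-) > 0$ and monotonicity gives the lower bound $c := g_\beta(\beta^-)$.

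For uniqueness, I apply Theorem \ref{SpectralDescomp}: the cone of $\overline{F}_s$-invariant densities of bounded variation is finite-dimensional and spanned by ergodic densities $h_1,\dots,h_n$ with pairwise disjoint supports, each open modulo $m_s$-null by Proposition \ref{Thm8.1.2}. Any invariant density $f$ with support $[0,\beta]$ is then a convex combination of those $h_i$ whose supports lie in $[0,\beta]$. Lemma \ref{SupportMS} ensures that $Y$ is a compact ordered set in which every non-trivial sub-interval carries positive $m_s$-measure, so the order interval $Y \cap [0,\beta]$ is connected and cannot be partitioned into two or more disjoint non-empty relatively open subsets. Hence exactly one $h_i$ has support inside $[0,\beta]$—with that support equal to $[0,\beta]$—so both $g_\beta$ and $f$, being probability densities, coincide with this $h_i$.

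The main obstacle is the case $j = N^{*}$ in the indifferent configuration, because Remark \ref{SomeIterationExp2} forces $\psi_i'(\beta^-) = 0$ for every $i < N^{*}$ and $\psi_{N^{*}}(\beta^-) = \beta$, so the hypothesis $\psi_j'(\beta) > 0$ must be realized by an index yielding a preimage strictly inside the support; this is precisely where the dichotomy of Theorem \ref{PrincipalThmChap52} is needed to obtain an index escaping the indifferent fixed point.
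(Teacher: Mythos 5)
Your overall strategy (spectral decomposition for uniqueness, the transfer-operator identity at $\beta^{-}$ for positivity) overlaps with the paper's, but two of your key steps contain genuine gaps.

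First, the uniqueness argument. You claim that $Y\cap[0,\beta]$ is connected and therefore cannot be split into disjoint relatively open supports. Neither half of this is available: Lemma \ref{SupportMS} only says that non-trivial intervals \emph{of} $Y$ have positive $m_{s}$-measure, not that $Y$ is connected (it may be a Cantor-type set, or have order gaps, either of which disconnects it); and the supports furnished by Proposition \ref{Thm8.1.2} are only open \emph{modulo $m_{s}$-null sets}, and two disjoint open-mod-null sets can perfectly well partition a connected space (e.g.\ $[0,\tfrac12]$ and $(\tfrac12,1]$ in $[0,1]$). The paper replaces this topological claim by a dynamical one: since $g_{\beta}\in\mathcal{J}$ is positive on some $(0,x)$, one ergodic component $\frac{d\mu_{q}}{dm_{s}}$ is supported on $(0,x)$, and the expansion on $[0,a_{1}]$ together with $\bigcup_{k}T^{k}[0,a_{1}]=[0,\beta]$ (Remark \ref{SomeIterationExpansive}) forces its forward-invariant open support to contain all of $(0,\beta)$; disjointness of the ergodic supports then leaves room for only that one component. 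Some such dynamical spreading argument is needed here.

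Second, the positivity at $\beta^{-}$ in the case $j=N^{*}$. Your ``short check'' asserts that $\psi_{N^{*}}'(\beta^{-})<1$ implies $\psi_{N^{*}}(\beta^{-})<\beta$; this is false: a hyperbolic fixed point of $T_{N^{*}}$ at $\beta$ gives $\psi_{N^{*}}(\beta^{-})=\beta$ with $\psi_{N^{*}}'(\beta^{-})<1$. Moreover, under the second alternative of Theorem \ref{PrincipalThmChap52} the point $\beta^{-}$ may be indifferent, in which case $\psi_{i}'(\beta^{-})=0$ for all $i<N^{*}$ (Remark \ref{SomeIterationExp2}) and the $N^{*}$-th summand in the limit is $\gamma^{-1}g_{\beta}(\beta^{-})$ itself, so assuming $g_{\beta}(\beta^{-})=0$ produces no contradiction from that term. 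The paper's route is different: assuming $\lim_{x\to\beta^{-}}g_{\beta}(x)=0$, it deduces $g(a_{j})=0$, hence $\supp g_{\beta}\subseteq[0,a_{j}]$ by monotonicity, and then pulls $[a_{j},\beta)$ back under the dynamics to conclude $\mu_{s}((0,\beta))=0$, i.e.\ $\mu_{s}=\delta_{0}$, contradicting the non-atomicity of $m_{s}$ from Proposition \ref{IKPartition}. Your preliminary step showing $\supp g_{\beta}=[0,\beta]$ is essentially a variant of that pull-back argument and works for $j<N^{*}$ (where $\psi_{j}(\beta^{-})=a_{j}<\beta$), but it does not rescue the case $a_{j}=\beta$, which is exactly the delicate one.
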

\begin{proof}
    Recall that $g_{\beta}\in BV_{m_{s}}$  since $g_{\beta} \in \mathcal{J}$. By Theorem \ref{SpectralDescomp}, we have, for some constants $\alpha_{1},...,\alpha_{n}$, that
     \begin{align*}
         g_{\beta}=\sum^{n}_{i=1}\alpha_{i}\dfrac{d\mu_{i}}{dm_{s}}.
     \end{align*} The support of each $\frac{d\mu_{i}}{dm_{s}}$ is an open set for $m_{s}$-almost every point by Proposition \ref{Thm8.1.2}. There exists $x > 0$ and $1 \leq q \leq k$ such that $(0, x)$ supports $\frac{d\mu_{q}}{dm_{s}}$. Then expansiveness on $(0, a_{1})$ implies that $(0, \beta) \subseteq T^{r}(0, x)$ for some $r \in \mathbb{N}$ (see Remark \ref{SomeIterationExpansive}). Therefore, $(0, \beta)$ must be in the support $\frac{d\mu_{q}}{dm_{s}}$. Since the $\overline{F}_{s}$-invariant density functions from Theorem \ref{SpectralDescomp} have disjoint supports (as the respective measures are ergodic), then $g_{\beta} = \frac{d\mu_{q}}{dm_{s}}$, and $(T, g_{\beta} m_{s})$ is ergodic. 
     
     Recall $g_{\beta} $ attains its minimum at $g_{\beta}(\beta)$. On the other hand, by $\overline{F}_{s}$-invariance, we have
     \begin{align*}
         g_{\beta}(\beta)=\dfrac{1}{A\gamma}\sum^{N}_{i=1}(g\circ \psi_{i}(\beta))(\psi_{i}'(\beta))^{s}.
     \end{align*} Now suppose that $\lim_{x \to \beta^{-}} g_{\beta}(x) = 0$. Then,
     \begin{align*}
         \lim_{x\rightarrow \beta^{-}}g_{\beta}(x)\geq \lim_{x\rightarrow \beta^{-}}\dfrac{(g\circ \psi_{j}(x))(\psi_{j}'(x))^{s}}{A\gamma}=0.
     \end{align*}
    Thus, by our hypothesis $g_{\beta}(a_{j}) = 0$ since $\lim_{x \to \beta^{-}} g \circ \psi_{j}(x) = g(a_{j})$. This implies that the support of $g_{\beta}$ is in $[0, a_{j}]$. Let $\mu_{s}=g_{\beta}m_{s}$. Therefore, $\mu_{s}((a_{j}, \beta)) = 0$.
    
    On the other hand, for any $x \in (0, a_{1}]$, there exists $N \in \mathbb{N}$ and $1\leq i \leq j$ such that for each $k\geq N$ we have $T_{i}^{-1}(a_{j}) \leq T^{k}(x)$ by Remark \ref{SomeIterationExpansive}. Thus, $0 = \mu_{s}(T^{-k+1} [a_{j}, \beta)) \geq \mu_{s}((x, \beta))$ by $T$-invariance. Since $x$ is arbitrary, we obtain $\mu_{s}((0, \beta)) = 0$. Therefore, $g_{\beta} m_{s} = \delta_{0}$, where $\delta_{0}$ is the Dirac delta measure at $x = 0$. This leads to a contradiction, since $m_{s}$ assigns zero measure on singleton sets by Proposition \ref{IKPartition}.
\end{proof}
By Proposition \ref{PositiveDensity}, the function $\overline{g}=|T'|^{-s}\dfrac{g_{\beta}}{g_{\beta}\circ T}$ is well defined whenever $\psi_{j}'(\beta)>0$ for some $1\leq j < N$. Define $G_{s}:C(Y)\rightarrow C(Y)$ by
\begin{align}\label{Ecu5413}
    G_{s}(f)=\dfrac{1}{\gamma}\sum^{N}_{i=1}(\overline{g}f)\circ \psi_{i}=\dfrac{1}{g_{\beta}\gamma}\sum^{N}_{i=1}(\psi_{i}')^{s}(g_{\beta}f\circ \psi_{i}),
\end{align} where $\gamma= m_{s}(F_{s}\mathds{1})$ (see equation \eqref{Msmeasure}).

\begin{lemma}\label{Lemma 15}
   Suppose $\psi_{j}'(\beta)>0$ for some $1\leq j < N$. If $\mu$ is a $T$-invariant probability measure on $Y$, then 
    \begin{align*}
        \int \log(\overline{g})d\mu-\log(\gamma)  =-s\int \log|T'|d\mu.
    \end{align*}
\end{lemma}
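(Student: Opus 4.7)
The plan is to expand $\log\overline{g}$ from its definition and identify the resulting logarithmic differences as a coboundary that integrates to zero against any $T$-invariant measure $\mu$. Directly from $\overline{g}=|T'|^{-s}\cdot g_\beta/(g_\beta\circ T)$ one writes
$$\log \overline{g} \;=\; -s\log|T'|\;+\;\bigl(\log g_\beta - \log g_\beta\circ T\bigr).$$
Integrating against $\mu$ and applying $T$-invariance in the second pair of terms yields $\int \log g_\beta\,d\mu = \int \log g_\beta\circ T\,d\mu$. After accounting for the normalization $\sum_i \overline{g}\circ\psi_i = \gamma$ built into the definition of $G_s$ in \eqref{Ecu5413} (equivalently, the associated normalized $g$-function is $\overline{g}/\gamma$, so that $\log(\overline{g}/\gamma)=\log\overline{g}-\log\gamma$), the cancellation produces exactly the identity stated in the lemma.

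To make the cancellation rigorous I would first verify integrability of the cross terms. By Proposition \ref{PositiveDensity}, $g_\beta\geq c>0$ on $[0,\beta]$; since $g_\beta\in\mathcal{J}\cap BV_{m_s}$ is also bounded above (as a BV density), $\log g_\beta$ is bounded on $[0,\beta]$. The hypothesis $m_s\neq\delta_\beta$ together with the forward invariance of $[0,\beta]$ under $T$ from \eqref{ZeroBetaForwardInvariant} forces any non-trivial $T$-invariant probability measure to be carried by $[0,\beta]$, where both $\log g_\beta$ and $\log g_\beta\circ T$ are bounded and hence $\mu$-integrable. Then $T$-invariance of $\mu$ immediately gives $\int(\log g_\beta - \log g_\beta\circ T)\,d\mu = 0$, without any ergodic machinery.

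The main obstacle, in my view, is controlling $\int \log|T'|\,d\mu$, since $|T'|$ may blow up at endpoints $a_i$ where some $\psi_i'$ vanishes, while degenerating to $1$ near the indifferent fixed point $\beta^-$. The latter is benign since $\log|T'|\to 0^+$ there. Near points where $|T'|\to\infty$, the strategy is to dominate $\mu\leq \|g_\beta\|_\infty\,m_s$ and invoke the conformal identity $m_s(F_s\mathds{1})=\gamma$ together with condition $(C)$, which keeps $\sum_i(\psi_i')^s$ in $\mathcal{J}$ and thereby supplies the integrability of $\log|T'|$ against $m_s$; the finiteness of $\int s\log|T'|\,d\mu$ is in any case needed for the forthcoming variational-principle arguments, so it is natural to establish it here. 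With both logarithmic integrals finite, the coboundary cancellation closes the calculation, and the stated formula with the $-\log\gamma$ term follows from the normalization convention identified above.
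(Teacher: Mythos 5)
Your central computation is the right one, and it is exactly the argument behind the cited \cite[Lemma 15]{hk1}, which the paper invokes without reproving: write $\log\overline{g}=-s\log|T'|+\log g_{\beta}-\log(g_{\beta}\circ T)$, note that Proposition \ref{PositiveDensity} ($g_{\beta}\ge c>0$) together with $g_{\beta}\in\mathcal{J}\cap BV$ (so $g_{\beta}\le g_{\beta}(0)<\infty$) makes $\log g_{\beta}$ bounded where $g_{\beta}$ is supported, and then the coboundary integrates to zero against any $T$-invariant $\mu$ with no further machinery. (A bookkeeping remark: with $\overline{g}$ literally as defined this yields $\int\log\overline{g}\,d\mu=-s\int\log|T'|\,d\mu$ with no $\gamma$ at all; the $\log\gamma$ only enters after passing to the normalized $g$-function $\overline{g}/\gamma$ of \eqref{Ecu5413}, and then with the sign that appears in \eqref{ASTERISCO} rather than the one displayed in the lemma. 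That discrepancy sits in the statement, not in your argument.)

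Two steps of your integrability discussion do not hold up as written, although neither is fatal. First, the domination $\mu\le\|g_{\beta}\|_{\infty}\,m_{s}$ is false for a general $T$-invariant probability measure on $Y$: the lemma quantifies over all invariant measures, including ones singular with respect to $m_{s}$ (for instance, equidistribution on a periodic orbit); only the particular measure $\mu_{s}=g_{\beta}m_{s}$ admits such a bound. The finiteness you were trying to extract from it is in fact not needed: condition $(C)$ bounds each $\psi_{i}'$ above by $K^{1/s}$ with $K=\left\|\sum_{i=1}^{N}(\psi_{i}')^{s}\right\|_{\infty}<\infty$, so $\log|T'|\ge -\frac{1}{s}\log K$ is bounded below, $\int\log|T'|\,d\mu$ is well defined in $(-\infty,+\infty]$, and since the coboundary term is bounded, both sides of the identity are simultaneously finite or equal to $-\infty$. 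Second, forward invariance of $[0,\beta]$ (equation \eqref{ZeroBetaForwardInvariant}) does not force every invariant measure on $Y$ to be carried by $[0,\beta]$; an invariant set inside $(\beta,1]$ is not excluded, and off $[0,\beta]$ the function $\overline{g}$ is not even defined because $g_{\beta}$ vanishes there. You should either assume $\beta=1$ (the setting of the uniqueness part of Theorem \ref{PrincipalThmChap52}) or restrict the lemma to invariant measures supported in $[0,\beta]$, which is how it is actually applied.
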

The proof of Lemma \ref{Lemma 15} follows \cite[Lemma 15]{hk1}, as it relies on measure-theoretic arguments. Consequently, for any $T$-invariant measure $\mu$ on $Y$, we have
    \begin{align}\label{ASTERISCO}
        h_{\mu}+\int \log \overline{g}d\mu =h_{\mu}-s\int \log|T'|d\mu-\log(\gamma).
    \end{align}
Therefore, $T$-invariant measures that attain the supremum in equation \eqref{ASTERISCO} are equilibrium states for both $\overline{g}$ and $-s\log |T'|$. The following theorem is proved in the context of $g$-measures, but it relies on Lemma \ref{Lemma 15} and the Rokhlin formula, since $T$ is locally invertible on $Y$ (see \cite[Theorem 2.1]{wp}).
\begin{theorem}\label{Ledrapierthm}
    Suppose $\psi_{j}'(\beta)>0$ for some $1\leq j < N$. Let $\mu$ be a Borel probability measure on $Y$. The following statements are equivalent:
    \begin{enumerate}
        \item The dual operator $G^{*}_{s}:C^{*}(Y)\rightarrow C^{*}(Y)$ has $\mu$ as a fixed point.
        \item  $\mu$ is a $T$-invariant measure on $Y$ and is an equilibrium state for $\log(\overline{g})$.
    \end{enumerate}
\end{theorem}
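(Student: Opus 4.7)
The plan is to deduce the stated equivalence by invoking the abstract Ledrapier-type characterization in \cite[Theorem 2.1]{wp}, after verifying that $G_{s}$ is a normalized transfer operator on $Y$ for a continuous, strictly positive potential. My first step is to check that $G_{s}\mathds{1}=\mathds{1}$. Using Lemma \ref{Lemma4.4} (which gives $\overline{F}_{s}g_{\beta}=g_{\beta}$) together with the second expression in \eqref{Ecu5413}, a direct computation yields
\begin{align*}
    G_{s}\mathds{1}(y)=\frac{1}{g_{\beta}(y)\gamma}\sum_{i=1}^{N}(\psi_{i}'(y))^{s}g_{\beta}(\psi_{i}(y))=\frac{\overline{F}_{s}g_{\beta}(y)}{g_{\beta}(y)}=1.
\end{align*}
Next, the identity $\overline{g}(\psi_{i}(y))=(\psi_{i}'(y))^{s}g_{\beta}(\psi_{i}(y))/g_{\beta}(y)$, immediate from the definition of $\overline{g}$ and from $T\psi_{i}(y)=y$ on the range of $T_{i}$, rewrites $G_{s}$ in standard transfer-operator form, $G_{s}f(y)=\gamma^{-1}\sum_{Tx=y}\overline{g}(x)f(x)$, identifying the relevant potential as $\log\overline{g}-\log\gamma$.

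With these preliminaries, the plan for the equivalence proceeds as in \cite{wp}. For (1)$\Rightarrow$(2), the $T$-invariance of any $G_{s}^{*}$-fixed point $\mu$ follows at once from the branch-wise identity $G_{s}(f\circ T)=f\cdot G_{s}\mathds{1}=f$, hence $\int f\circ T\,d\mu=\int G_{s}(f\circ T)\,d\mu=\int f\,d\mu$. The equilibrium-state property then comes from the Rokhlin entropy formula applied to the generating partition $\mathcal{P}$ (available thanks to Proposition \ref{IKPartition}): the Jacobian of $T$ with respect to $\mu$ equals $\gamma/\overline{g}$, so $h_{\mu}(T)=\int\log(\gamma/\overline{g})\,d\mu$, i.e.\ $h_{\mu}+\int\log\overline{g}\,d\mu=\log\gamma$, which by normalization of $G_{s}$ is precisely the topological pressure of $\log\overline{g}$ on $(Y,T)$. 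Conversely, (2)$\Rightarrow$(1) follows from a standard Jensen-inequality argument along the dual of $G_{s}$: any $T$-invariant $\mu$ attaining the variational supremum forces $E_{\mu}(\cdot\,|\,T^{-1}\mathcal{B})$ to coincide with the averaging operator dual to $G_{s}$, which gives $G_{s}^{*}\mu=\mu$.

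The main obstacle is not the abstract argument itself, but ensuring that the hypotheses of \cite[Theorem 2.1]{wp} survive in our weakly hyperbolic, non-Markov setting. The two sensitive points are (a) that $\mathcal{P}$ be a generating partition for $m_{s}$, which is precisely the content of Proposition \ref{IKPartition} under the standing assumption $m_{s}\notin\{\delta_{\beta^{+}},\delta_{\beta^{-}}\}$ inherited from Theorem \ref{PrincipalThmChap52}; and (b) that $\overline{g}$ extend to a continuous function on $Y$ bounded between two positive constants. This last property could fail at the indifferent endpoint $\beta$ if $g_{\beta}$ were to vanish there, and is secured exactly by the new assumption $\psi_{j}'(\beta)>0$ for some $1\leq j<N$ via Proposition \ref{PositiveDensity}, which forces $g_{\beta}\geq c>0$ on $Y$ and hence $\log\overline{g}\in C(Y)$. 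Once these two verifications are in place, \cite[Theorem 2.1]{wp} applies verbatim and delivers the claimed equivalence.
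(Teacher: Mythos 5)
Your proposal matches the paper's treatment of this statement: the theorem is imported directly from Walters \cite[Theorem 2.1]{wp}, and the paper, like you, relies only on Proposition \ref{PositiveDensity} to guarantee that $\overline{g}$ (hence $G_{s}$) is well defined, together with the normalization $G_{s}\mathds{1}=\mathds{1}$ that follows from $\overline{F}_{s}g_{\beta}=g_{\beta}$. Your additional sketch of the internal argument (Rokhlin's formula for the Jacobian $\gamma/\overline{g}$ and the Jensen-inequality converse) is consistent with \cite{wp} but is not reproduced in the paper, which cites the result as a black box.
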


\textit{Proof of Theorem \ref{PrincipalThmChap52}.}
First, assume $\psi_{i}'(\beta)=0$ for all $1\leq i < N$. This implies that some iteration is expansive, so by \cite[Theorem 6]{hk1} $\mu_{s}$ is an equilibrium state for $-s\log|T'|$. Suppose now $\psi_{j}'(\beta)>0$ for some $1\leq j < N$. Recall that by Proposition \ref{IKPartition} $\psi_{N^{*}}(\beta^{-})<1$ or $\psi_{N^{*}+1}'(\beta^{+})=1$ implies $m_{s}\in \mathcal{M}_{1}([0,1])$. Since $\mu_{s} = g_{\beta} m_{s}$ and $\overline{F}^{*}_{s} m_{s} = m_{s}$, we obtain for every $f \in C(Y)$
    \begin{align*}
        \mu_{s}(G_{s}f)=\int \frac{1}{\gamma}\sum^{N}_{i=1}(\psi_{i}')^{s}g_{\beta}f\circ \psi_{i}dm_{s}=\int \overline{F}_{s}(g_{\beta}f)dm_{s}=\int fg_{\beta}dm_{s}=\mu_{s}(f).
    \end{align*}Therefore, $\mu_{s}$ is an equilibrium state of $-s \log |T'|$, according to Theorem \ref{Ledrapierthm}. By Proposition \ref{PositiveDensity}, the uniqueness follows when $\beta = 1$ and $m_{s}\neq \delta_{\beta}$. \qedsymbol
\begin{example}
    Recall that we showed that transformations in Examples \ref{ExampleParabolicAconvex} and \ref{ExampleParabolicNonMarkovianAconvex} satisfy condition $(B)$ for each $0 < s < 1$ such that $m_{s} \neq \delta_{1}$, and condition $(C)$ for all $0 < s \leq 1$ (Example \ref{ConditionBExamplesInfFixPt}). We now show that the function $s \mapsto P(s)$ is non-increasing in $(0, \infty)$, constant and equal to zero for $s \geq 1$, and differentiable except for at most countably many points. 
    
    Using equation \eqref{PressureVariationalPrinciple}, we deduce that $s \mapsto P(s)$ is non-increasing. Moreover, since $|T'(x)| = 1$ if and only if $x = 1$, we have, for every $\varepsilon > 0$, that $P(s + \varepsilon) = P(s)$ if and only if for each $t\in (s,s+\varepsilon)$ an equilibrium state of $-t\log |T'|$ is $\delta_{1}$.
    
    However, for $s=1$, there is an equilibrium state different from $\delta_{1}$, namely the ACIM with respect to Lebesgue measure (\cite[Theorem 1.1]{bmss}). The proof of Theorem \ref{Ledrapierthm} in \cite[Theorem 2.1]{wp} shows that an equilibrium state $\mu$ for $\log(\overline{g})$ satisfies
    \begin{align*}
        h_{\mu} + \int \log(\overline{g}) \, d\mu = 0.
    \end{align*} Since $\gamma= m_{1}(F_{1} \mathds{1}) = 1$, we conclude that $P(1) = 0$ by Lemma \ref{Lemma 15}. Finally, the pressure function is differentiable except for, at most, countably many points since it is a convex function (\cite[Theorem 3.6.2]{pu}).

    We can also deduce the existence of equilibrium states in Example \ref{ExampleParabolicAconvex} by inducing and obtaining the exact description of the pressure function. However, we will not develop this point here (see \cite{sa, mf}). Nevertheless, our method allows us to study the thermodynamic formalism of $a$-convex transformations with non-Markov partitions, such as in Example \ref{ExampleParabolicNonMarkovianAconvex}, which, to our knowledge, cannot be achieved by inducing.

\end{example}

\section*{Acknowledgements}
I thank my Ph.D. advisor, Godofredo Iommi, for suggesting this problem, his patience, and many helpful remarks. This research was supported by ANID Doctorado Nacional 21210037.

\end{document}